\title{Misiurewicz points for polynomial maps and transversality}
\definecolor{green}{rgb}{0,0.5,0}
\definecolor{dkgreen}{rgb}{0,0.6,0}
\definecolor{gray}{rgb}{0.5,0.5,0.5}
\definecolor{mauve}{rgb}{0.58,0,0.82}
\scriptsize\color{black},  
\definecolor{orange}{rgb}{1,0.65,0.17}
\def\C{\mathbb{C}}
\def\Z{\mathbb{Z}}
\def\Q{\mathbb{Q}}
\def\P{\mathbb{P}}
\def\A{\mathbb{A}}
\def\F{\mathbb{F}}
\def\N{\mathbb{N}}
\def\O{\mathcal{O}}
\newcommand{\col}{\,{:}\,}
\newcommand{\sst}{^{\operatorname{st}}}
\newcommand{\tth}{^{\operatorname{th}}}
\DeclareMathOperator{\Res}{Res}
\DeclareMathOperator{\Disc}{Disc}
\theoremstyle{plain}
\newtheorem{thm}{Theorem}[section]
\newtheorem*{thm*}{Theorem}
\newtheorem{lem}[thm]{Lemma}
\newtheorem{prop}[thm]{Proposition}
\newtheorem{cor}[thm]{Corollary}
\theoremstyle{definition}
\newtheorem*{defn}{Definition}
\newtheorem{exmp}{Example}[section]
\newtheorem*{exmp*}{Example}
\theoremstyle{remark}
\newtheorem*{rem}{Remark}
\author{Benjamin Hutz}
\address{
Department of Mathematics \\
Florida Institute of Technology \\
Melbourne, FL}
\email{bhutz@fit.edu}
\author{Adam Towsley}
\address{
Department of Mathematics\\
Elmira College\\
Elmira, NY
}
\email{atowsley@elmira.edu}
\begin{document}
\begin{abstract}
    The behavior under iteration of the critical points of a polynomial map plays an essential role in understanding its dynamics.
    We study the special case where the forward orbits of the critical points are finite.
    Thurston's theorem tells us that fixing a particular critical point portrait and degree leads to only finitely many possible polynomials (up to equivalence) and that, in many cases, their defining equations intersect transversely.
    We provide
    explicit algebraic formulae for the parameters where the critical points of the unicritical polynomials and bicritical cubic polynomials have a specified exact period. We pay particular attention to the parameters where the critical orbits are strictly preperiodic, called Misiurewicz points. Our main tool is the generalized dynatomic polynomial.  We also study the discriminants of these polynomials to examine the failure of transversality in characteristic $p>0$ for the unicritical polynomials $z^d + c$.
\end{abstract}

\maketitle
\section{Introduction and Results}
    The behavior under iteration of the critical points of a polynomial $f$ plays an essential role in understanding the dynamics of $f$. We study the special case where the forward orbits of the critical points are finite; such maps are called \emph{post-critically finite} (PCF). The goal of this paper is to construct (algebraically) all the PCF polynomials of any degree with one (affine) critical point (unicritical) or PCF polynomials of degree 3 with two (affine) critical points (bicritical). Additionally, we study the failure of transversality for the unicritical family in characteristic $p>0$.

    The unicritical polynomials are elements of the families
    \begin{equation*}
        f_{d,c}(z) = z^d +c
    \end{equation*}
    with $d \geq 2$ and $c \in \C$ with critical point $0$. The cubic bicritical polynomials are elements of the family
    \begin{equation*}
        g_{a,v}(z) = z^3 - 3a^2z + (2a^3 + v)
    \end{equation*}
    with $a,v \in \C$ and critical points $\pm a$ and critical value $g_{a,v}(a) = v$.

    Given a polynomial $f$ we denote its $n$-th iterate by $f^n ( z ) = f \circ f^{n-1}(z)$.
    We say $z_0$ is a \emph{preperiodic point with period} $(m,n)$ for $f$ if $f^{m+n}(z_0) = f^m(z_0)$. We call $m$ the \emph{preperiod}. For a preperiodic point of period $(m,n)$, we say that it has \emph{exact period $(m,n)$} if it is does not have period $(k,t)$ for any $k \leq m$ and $t \mid n$ with at least one of $k<m$ or $t < n$.
    If the \emph{preperiod} $m$ is positive, we say that the point is \emph{strictly preperiodic}.
    For example a strictly preperiodic point $z_0$ with exact period $(3,2)$ has orbit
    \begin{equation*}
            \xygraph{
            !{<0cm,0cm>;<1cm,0cm>:<0cm,1cm>::}
            !{(0,0) }*+{z_0}="a"
            !{(1,0) }*+{\bullet}="b"
            !{(2,0) }*+{\bullet}="c"
            !{(3,0) }*+{\bullet}="d"
            !{(4,0) }*+{\bullet}="e"
            "a":"b"
            "b":"c"
            "c":"d"
            "d":@(rd,ru)"e"
            "e":@(rd,ru)"d"
            }
    \end{equation*}
    \begin{defn}
        In the unicritical case we say that $c_0$ is a \emph{Misiurewicz point} of period $(m,n)$ if the orbit of the critical point $0$ by $f_{d,c_0}$ has exact period $(m,n)$ and $m >0$.
        In the cubic bicritical case, we define a \emph{Misiurewicz point} as a pair of parameter values $(a,v)$ such that both critical points, $\pm a$, are strictly preperiodic for $g_{a,v}$.
    \end{defn}

    To analyze the Misiurewicz points algebraically, both for unicritical and bicritical cubic polynomials, we use the generalized dynatomic polynomial, \cite{Hutz},
    \begin{equation*}
        \Phi^{\ast}_{f,m,n}(z) =  \frac{\Phi^{\ast}_{f,n}(f^m(z))}{\Phi^{\ast}_{f,n}(f^{m-1}(z))},
    \end{equation*}
    where
    \begin{equation*}
        \Phi^{\ast}_{f,n}(z) = \prod_{k \mid n} f^k(z)^{\mu(n/k)}
    \end{equation*}
    is the dynatomic polynomial defined in \cite{SilvermanADS}. The function $\mu$ is the M\"obius function which satisfies $\mu(n) =0$ for $n$ not squarefree, $\mu(1)=1$, and $\mu(n) = (-1)^k$ when $n$ has $k$ distinct prime factors.
    While the points of exact period $(m,n)$ are among the roots of the polynomial $\Phi^{\ast}_{f,m,n}$, for $m \neq 0$, even in the case the roots are simple, it is not necessarily the case that the roots are precisely the points of exact period $(m,n)$.
    \begin{exmp}
        Let $f(z) = z^2-1$. Then we compute
        \begin{equation*}
            \Phi^{\ast}_{f,1,2}(z) = z(z-1).
        \end{equation*}
        However, $0$ is a preperiodic point with minimal period $(0,2)$.
    \end{exmp}
    This makes the construction of a polynomial whose roots are precisely the Misiurewicz points of exact period $(m,n)$ more subtle. In Section \ref{sect_uni}, we prove an explicit algebraic construction of all Misiurewicz points for the unicritical family.
    \begin{thm}\label{thm1}
        The $c$ values for which $f_{d,c}(z) = z^d + c$ is post-critically finite with $0$ having exact period $(m,n)$ are the roots of
        \begin{equation*}
            G_d(m,n)=
            \begin{cases}
              \frac{\Phi_{f,m,n}^{\ast}(0)}{\Phi_{f,0,n}^{\ast}(0)^{d-1}} & \text{if } m \neq 0 \text{ and } n \mid (m-1)\\
              \Phi_{f,m,n}^{\ast}(0) & \text{otherwise}
            \end{cases}
        \end{equation*}
        where $\Phi_{f,m,n}^{\ast}$ is the dynatomic polynomial.
        Moreover, all of the roots of $G_d(m,n)$ as a polynomial in $c$ are simple.
    \end{thm}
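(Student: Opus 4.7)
The plan is to analyze $P(c) := \Phi^{\ast}_{f_{d,c}, m, n}(0)$ as a polynomial in $c$: first identify which $c_0$ contribute roots, then compute the multiplicities of the spurious ones, and finally establish simplicity of the Misiurewicz roots. For Step 1, note that $c_0$ satisfies $P(c_0) = 0$ precisely when $z = 0$ is a root of the generalized dynatomic polynomial $\Phi^{\ast}_{f_{d,c_0}, m, n}(z)$. Using the defining identity $\Phi^{\ast}_{f, m, n}(z) = \Phi^{\ast}_{f, n}(f^m(z))/\Phi^{\ast}_{f, n}(f^{m-1}(z))$, together with the standard fact that a super-attracting cycle of exact period $t$ is a simple root of $\Phi^{\ast}_{f, t}$ but not of $\Phi^{\ast}_{f, n}$ for $t \mid n$ with $t < n$, this happens either because $0$ has exact period $(m, n)$ for $f_{d, c_0}$ (the Misiurewicz case), or because $0$ lies in a super-attracting cycle of exact period $n$ for $f_{d, c_0}$ (the spurious case); all other preperiod types for $0$ can be ruled out by local-degree or non-vanishing arguments. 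In the spurious case, $f^k$ has local degree $d^{\lceil k/n \rceil}$ at $z = 0$, so the $z$-order of $\Phi^{\ast}_{f, m, n}(z)$ at $z = 0$ equals $d^{\lceil m/n \rceil} - d^{\lceil (m-1)/n \rceil}$, which is positive precisely when $n \mid (m-1)$, matching the case split in the statement.

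\textbf{Step 2: Multiplicity at spurious roots.} Writing $P(c) = A(c)/B(c)$ with $A(c) = \Phi^{\ast}_{f_{d, c}, n}(f_{d, c}^m(0))$ and $B(c) = \Phi^{\ast}_{f_{d, c}, n}(f_{d, c}^{m-1}(0))$, I would track the two analytic branches $\alpha(c), \beta(c)$ of the super-attracting $n$-cycle with $\alpha(c_0) = f_{c_0}^m(0)$ and $\beta(c_0) = 0$. Both branches are smooth by the implicit function theorem since the cycle's multiplier is $0 \neq 1$. Super-attracting contraction (B\"ottcher normalization) gives $f_{d, c}^{qn}(0) - \beta(c) = O((c - c_0)^{d^q})$ where $q = (m-1)/n$, while the identity $f_c(a) - f_c(b) = (a - b)(a^{d-1} + \cdots + b^{d-1})$ combined with $\beta(c) = O(c - c_0)$ yields $f_{d, c}^m(0) - \alpha(c) = O((c - c_0)^{d^q + d - 1})$. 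Subtracting the orders of $A$ and $B$, one obtains $\operatorname{ord}_{c_0} P = d - 1$. Since $\Phi^{\ast}_{f_{d, c}, 0, n}(0)$ has a simple zero at each such $c_0$ (again by the implicit function theorem applied to the super-attracting cycle), dividing by $\Phi^{\ast}_{f_{d, c}, 0, n}(0)^{d-1}$ exactly cancels the spurious multiplicity, so $G_d(m,n)$ has no roots coming from periodic critical points.

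\textbf{Step 3: Simplicity at Misiurewicz points, and the main obstacle.} At a Misiurewicz $c_0$, the value $B(c_0)$ is nonzero (since $f_{c_0}^{m-1}(0)$ is strictly preperiodic and hence not a root of $\Phi^{\ast}_{f_{c_0}, n}$), so $\operatorname{ord}_{c_0} P = \operatorname{ord}_{c_0} A = \operatorname{ord}_{c_0}(f_{d, c}^m(0) - \alpha(c))$, where $\alpha(c)$ is the smooth branch of the period-$n$ cycle through $f_{c_0}^m(0)$. Simplicity reduces to the transversality $D_m \neq \alpha'(c_0)$, where $D_m = \frac{d}{dc} f_{d, c}^m(0)|_{c_0}$ is computed by the recursion $D_{k+1} = d (f_{d, c_0}^k(0))^{d-1} D_k + 1$ with $D_0 = 0$. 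This is the classical Thurston-style transversality for the unicritical Misiurewicz locus (cf.\ Douady--Hubbard, Epstein) and may be either invoked directly or proved by induction. The main obstacle in the whole argument is the multiplicity calculation in Step 2: a naive count gives $z$-order $d^{(m-1)/n}(d-1)$, much larger than $d - 1$, and reconciling this with the actual $c$-order requires carefully analyzing the Puiseux-branch structure of $\Phi^{\ast}_{f, m, n}(z, c) = 0$ at $(0, c_0)$, which reflects the degree-$d$ ramification of $f^{m-1}$ at $z = 0$ when $0$ is periodic of period $n$.
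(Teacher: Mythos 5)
Your overall architecture---identify the spurious roots as the parameters where $0$ is superattracting of exact period $n$ with $n\mid(m-1)$, show that each such root contributes multiplicity exactly $d-1$ so that dividing by $\Phi^{\ast}_{f,0,n}(0)^{d-1}$ removes it, and reduce simplicity at genuine Misiurewicz points to a Douady--Hubbard/Epstein transversality statement---matches the content of the theorem, and your final multiplicity $d-1$ is the right number. But the step you yourself single out as the main obstacle, the passage from $z$-order to $c$-order in Step 2, rests on a false estimate. The claim $f_{d,c}^{qn}(0)-\beta(c)=O((c-c_0)^{d^{q}})$ confuses the superattracting contraction rate at the frozen parameter $c_0$ with the rate of approach as $c$ varies: for $c\neq c_0$ the perturbed cycle through $\beta(c)$ has multiplier $\lambda(c)=d\,\beta(c)^{d-1}\cdot(\text{unit})=O((c-c_0)^{d-1})$, which is small but nonzero, so the orders of contact grow additively rather than exponentially. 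Setting $v_q=f_{d,c}^{qn}(0)-\beta(c)$ and using $f_c(\beta+v)-f_c(\beta)=(\beta+v)^d-\beta^d$, one finds $\ord_{c_0}v_1=d$ and $\ord_{c_0}v_{q+1}=\ord_{c_0}v_q+(d-1)$, hence $\ord_{c_0}v_q=q(d-1)+1$; this already disagrees with $d^q$ at $q=2$ (for $d=2$, $n=1$, $c_0=0$ one computes $f_c^2(0)-\beta(c)=-2c^3+O(c^4)$, order $3$, not $4$). Your two claimed orders happen to have the correct difference $d-1$, so the conclusion survives, but as written the argument contains a false lemma, and the Puiseux-branch analysis you promise in order to reconcile the $z$-order $d^{q}(d-1)$ with the $c$-order $d-1$ is never actually carried out. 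Step 3 likewise defers the entire simplicity assertion---the real content of the transversality clause---to an unspecified citation or induction, and Step 1's dismissal of ``all other preperiod types'' glosses over why $f^m_{c_0}(0)$ cannot land on a parabolic cycle of exact period $t\mid n$, $t<n$, whose multiplier is a primitive $(n/t)$-th root of unity.

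The paper reaches the same count by a purely algebraic route that sidesteps all of this. Since $f_{d,c}(z)=z^d+c$, one has $f_{d,c}^{m+k}(0)-f_{d,c}^{m}(0)=(f_{d,c}^{m+k-1}(0))^{d}-(f_{d,c}^{m-1}(0))^{d}$, so the building block $F_k=(f_{d,c}^{m+k}(0)-f_{d,c}^{m}(0))/(f_{d,c}^{m+k-1}(0)-f_{d,c}^{m-1}(0))$ factors as $\prod_{0<j<d}(f_{d,c}^{m+k-1}(0)-\zeta^{j}f_{d,c}^{m-1}(0))$ for $\zeta$ a primitive $d$-th root of unity. Epstein's theorem gives simplicity of the roots of each factor; two factors can share a root only when $f^{m-1}_{d,c}(0)=f^{m+k-1}_{d,c}(0)=0$, in which case all $d-1$ factors vanish simultaneously, giving multiplicity exactly $d-1$; and a M\"obius-sum bookkeeping over $k\mid n$ (Lemmas \ref{lem1} and \ref{lem2}) kills every contribution except exact period $(m,n)$ (multiplicity $1$) and exact period $(0,n)$ with $n\mid(m-1)$ (multiplicity $d-1$). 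If you wish to salvage the analytic argument, replace the B\"ottcher estimate with the additive recursion above; but the exact-power factorization is shorter and delivers the simplicity of the Misiurewicz roots from the very same citation you were already planning to invoke.
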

    Additionally, Theorem \ref{thm1} leads to an explicit counting formula for Misiurewicz points (Corollary \ref{cor1}) and implies that Misiurewicz points are algebraic integers (Corollary \ref{cor2}).

    Thurston's rigidity theorem \cite{DH} says that, over the complex numbers, any fixed behavior of the critical points of a PCF map will be realized by only finitely many rational maps, up to equivalence and excepting Latt\`{e}s maps.
    Furthermore, in many cases, the equations defining these maps by critical orbit relations intersect transversely \cite{Epstein2,Epstein}.
    Theorem \ref{thm1}, gives a proof of Thurston's theorem for unicritical polynomials, including the transverality conclusion.
    In Section \ref{sect_thurston} we examine the failure of transversality in characteristic $p>0$. In particular, any prime which divides a discriminant of $G_d(m,n)$ for some $(m,n)$ is a characteristic where transversality does not hold. Silverman \cite{Silverman20} raised the question of describing this set of primes. We restrict to the case $m=0$ and give two reformulations of the problem in terms of periodic points of certain dynamical systems. These reformulations allows us to prove various properties about the primes where transversality fails and provide a connection to the dynamical Manin-Mumford problem. Additionally, we describe how the power dividing the discriminant is related to ramification in the number field generated by $G_d(0,n)$.

    In Section \ref{sect_bi}, we prove an explicit algebraic construction of all Misiurewicz points for the cubic bicritical family.
    \begin{defn}
        Define
        \begin{equation*}
            T(m,n,z) = \begin{cases}
                \frac{\Phi^{\ast}_{g,m,n}(z)}{\Phi^{\ast}_{g,0,n}(z)}  &  \text{if } m\neq0 \text{ and } n \mid (m-1)\\
                \Phi^{\ast}_{g,m,n}(z) & \text{otherwise}
            \end{cases}
        \end{equation*}
        Note that $T(m,n,z)$ is a polynomial in $a,v,z$.
    \end{defn}
    \begin{thm}\label{thm_degree3}
        Let $(m_1,n_1)$ and $(m_2,n_2)$ be pairs of non-negative integers. If $n_1 \neq n_2$, or if $n_1=n_2$ and $n_1$ does not divide at least one of $(m_1-1)$ and $(m_2-1)$,
	    then the points on the variety
        \begin{equation*}
            V(T(m_1,n_1,a),T(m_2,n_2,-a)) \subset \A^2_{a,v}
        \end{equation*}
        are exactly the parameters for which $g_{a,v}$ is PCF with $(m_1,n_1)$ and $(m_2,n_2)$ as the exact periods of the critical orbits for $a$ and $-a$, respectively.

        If $n_1=n_2$ and both $n_1 \mid m_1-1$ and $n_1 \mid m_2-1$, then we have to remove all the points $(0,v)$ where $v$ ranges over the parameters from Theorem \ref{thm1}, where $0$ has exact period $n_1$. In particular, the points contained in the difference
        \begin{equation*}
            V(T(m_1,n_1,a),T(m_2,n_2,-a)) - V(T(0,n_1,v)) \subset \A^2_{a,v}
        \end{equation*}
        are exactly the parameters for which $g_{a,v}$ is PCF with $(m_1,n_1)$ and $(m_2,n_2)$ as the exact periods of the critical orbits for $a$ and $-a$, respectively.
    \end{thm}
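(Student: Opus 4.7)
My plan is to treat the two critical points $\pm a$ of $g_{a,v}$ independently, running an argument parallel to Theorem \ref{thm1} for each, and then analyze what happens on the locus where these two critical points collide.

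\emph{Step 1 (a bicritical analog of Theorem \ref{thm1}).} I would first establish the key lemma: as a polynomial in $\Q[a,v]$, the specialization $T(m,n,a)$ vanishes at $(a_0,v_0)$ if and only if $a_0$ has exact period $(m,n)$ under $g_{a_0,v_0}$; symmetrically for $T(m,n,-a)$. The argument mirrors Theorem \ref{thm1}: the only spurious zeros of $\Phi^{\ast}_{g,m,n}(a)$ (parameters where $a$ has some exact period strictly refining $(m,n)$) arise from $a$ being periodic of exact period $n$, which forces $n \mid m-1$. Since each critical point $\pm a$ has local degree $2$, as one sees from the factorization $g_{a,v}(z) - g_{a,v}(a) = (z-a)^2(z+2a)$, the order of the spurious vanishing is $2-1 = 1$, exactly cancelled by the single division by $\Phi^{\ast}_{g,0,n}(a)$ in the definition of $T$. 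This is the bicritical counterpart of the $(d-1)$-th power division in Theorem \ref{thm1}.

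\emph{Step 2 (generic case).} Given the lemma, when $n_1 \neq n_2$ or when at least one of the divisibility conditions $n_i \mid m_i - 1$ fails, no spurious zeros are created on either side, and intersecting $V(T(m_1,n_1,a))$ with $V(T(m_2,n_2,-a))$ yields exactly the parameters realizing both prescribed exact periods simultaneously. This is the first assertion of the theorem.

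\emph{Step 3 (exceptional case).} The main obstacle is $n_1 = n_2 = n$ with $n \mid m_1 - 1$ and $n \mid m_2 - 1$. The heart of the difficulty is that on the line $a = 0$ the two critical points collide, so $g_{0,v}(z) = z^3 + v$ is unicritical and the single critical point $0$ acquires local degree $3$ rather than $2$. Consequently, the single division used to form $T$ under-corrects when specialized to $a = 0$ — the correct correction in the unicritical setting would be the $(d-1) = 2$-th power division of Theorem \ref{thm1} for $d = 3$ — so the intersection picks up additional parameters of the form $(0,v_0)$, precisely the finite set of $v_0$ at which $0$ has exact period $n$ under $z^3+v_0$, i.e., the Theorem \ref{thm1} parameters of type $(0,n)$. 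Because such $v_0$ are themselves on the $n$-cycle of $g_{0,v_0}$, they satisfy $\Phi^{\ast}_{g,n}(v_0) = 0$ and hence lie on $V(T(0,n,v))$, justifying the subtraction.

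I expect the main technical work to lie in Step 1 — specifically, proving the divisibility $\Phi^{\ast}_{g,0,n}(a) \mid \Phi^{\ast}_{g,m,n}(a)$ in $\Q[a,v]$ when $n \mid m-1$ (so that $T$ is indeed a polynomial) and pinning down the spurious order of vanishing to be exactly $1$, not higher (which would break the identification in Step 3). Step 3 then reduces to a careful local analysis near $a = 0$ comparing the orders of vanishing of the unicritical and bicritical correction factors, together with verification that the $a=0$ slice of $V(T(0,n,v))$ is exactly the set of spurious points arising from the under-correction.
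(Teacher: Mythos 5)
There is a genuine gap in your Step 1, and it propagates to Step 2. The key lemma you propose --- that $T(m,n,a)$ vanishes at $(a_0,v_0)$ if and only if $a_0$ has exact period $(m,n)$ --- is false. You assert that the only spurious zeros of $\Phi^{\ast}_{g,m,n}(a)$ come from $a$ being purely periodic, but there is a second source that your local-degree argument misses: the orbit of $a$ can pass through the \emph{other} critical point $-a$. Concretely, writing $F = g_{a,v}^{m+k-1}(z)$ and $G = g_{a,v}^{m-1}(z)$, the quotient $F_k = F^2 + FG + G^2 - 3a^2$ acquires a multiple zero against the denominator exactly when $F = G$ and $F^2 = a^2$, i.e.\ in \emph{two} sub-cases: $g^{m-1}_{a,v}(a) = g^{m+k-1}_{a,v}(a) = a$ (the periodic case you treat, cancelled by dividing by $\Phi^{\ast}_{g,0,n}(a)$) and $g^{m-1}_{a,v}(a) = g^{m+k-1}_{a,v}(a) = -a$ (which you do not treat). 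In the latter case $-a$ is periodic, $a$ sits in the strictly preperiodic tail of the orbit of $-a$, and $(a_0,v_0)$ can be a zero of $T(m,n,a)$ without $a_0$ having exact period $(m,n)$ --- namely when $-a$ fails to have exact period $(1,n)$. This is case (3) of the paper's Proposition \ref{prop1}, and it is not removed by the division defining $T$.

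Because of this, your Step 2 conclusion (``no spurious zeros are created on either side, so the intersection is exactly the desired locus'') does not follow from intersecting two clean loci; the collision-type spurious zeros of $T(m_1,n_1,a)$ must be adjudicated using the \emph{second} equation. The paper does exactly this: at such a point the condition $T(m_2,n_2,-a)=0$ forces $-a$ to have exact period $(m_2,n_2)=(1,n_1)$, and one checks that $a$ has exact period $(m_1,n_1)$ if and only if $-a$ has exact period $(1,n_1)$; so these points either are excluded by the second equation or are legitimate after all. Your Step 3 (the $a=0$ under-correction, since $\Phi^{\ast}_{g,0,n}(0)^2$ exactly divides $\Phi^{\ast}_{g,m,n}(0)$ while $T$ divides out only one factor) is essentially the paper's argument and is fine. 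To repair the proof, replace the if-and-only-if lemma of Step 1 with a classification of all spurious zeros of $T(m,n,a)$ --- periodic $a$, the $a=0$ degeneration, and the orbit-collision case --- and then handle the collision case by invoking the second defining equation as above.
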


\begin{code}
Unicritical definition
\begin{python}
def Gleason(self,N):
    R=self.base_ring()
    m=N[0]
    n=N[1]
    F=R(self.dynatomic_polynomial([m,n]).subs({x:0,y:1}))
    if m!=0 and (m-1)
        F=F/(R(self.dynatomic_polynomial([0,n]).subs({x:0,y:1})))^(self.degree()-1)
    return(self.codomain().base_ring()(F))
\end{python}

Cubic bicritical definition
\begin{python}
def Gleason2(self,N,v):
    R=self.base_ring()
    m=N[0]
    n=N[1]
    F=f.dynatomic_polynomial([m,n])(v,1)
    if m!=0 and (m-1)
        F=F._maxima_().divide((f.dynatomic_polynomial([0,n])(v,1)))[0].sage()
    return(self.codomain().base_ring()(F))
\end{python}

\end{code}

\section{Background and Discussion}
    There is a growing collection of results on the set of post-critically finite polynomials in the moduli space of all polynomials \cite{Baker7, FG, Ingram2, Ingram5}. The earliest result relating to transversality is perhaps due to Gleason that
    \begin{equation*}
        f_{2,c}^n(0) = 0
    \end{equation*}
    has only simple roots as a polynomial in $c$, since when reducing modulo 2, $(f_{2,c}^n(0))'$ is always 1. Epstein \cite{Epstein} gives an algebraic proof of this fact for $f_{d,c}$ (and for any prime power polynomial).

    One approach to studying PCF maps is to fix a critical portrait and to describe all of the maps with that portrait.
    The \emph{critical portrait} of a map is the (weighted) directed graph whose vertices are the points in the orbits of the critical points (weighted by multiplicity) and whose directed edges are defined by the map, i.e., there is an arrow $P \to Q$ if and only if $Q$ is the image of $P$.
    \begin{exmp}
        The following is a critical portrait for a critical map with two critical points $P,Q$, one of which if fixed, one of which is preperiodic.
        \begin{equation*}
            \xygraph{
            !{<0cm,0cm>;<1cm,0cm>:<0cm,1cm>::}
            !{(0,0) }*+{P}="a"
            !{(1,0) }*+{\bullet}="b"
            !{(2,0) }*+{\bullet}="c"
            !{(3,0) }*+{\bullet}="d"
            !{(4,0) }*+{Q}="e"
            "a":"b"
            "b":"c"
            "c":@(rd,ru)"d"
            "d":@(rd,ru)"c"
            "e":@(rd,ru)"e"
            }
        \end{equation*}
    \end{exmp}

    Any given map has finitely many critical points, so the critical portrait of a PCF map imposes finitely many relations.
    Thurston's theorem tells us that only finitely many maps of fixed degree (up to equivalence) satisfy these relations \cite{DH}. An important consequence of Thurston's theorem is that, in many cases, the subvarieties defined by the critical point relations intersect transversely \cite{Epstein2,Epstein}.

    The quadratic family $f_{2,c}$ has received extensive study in both algebraic and complex dynamical contexts. The Mandelbrot set is the set of complex $c$ values where the orbit of the critical point remains bounded. The values where the critical orbit is purely periodic ($f^n(0) = 0$ for some $n$) are the centers of the hyperbolic components of the Mandelbrot set \cite{DHI,DHII,Milnor,Schleicher}.
    There are several methods to approximate the $c$ values which are centers of hyperbolic components:
    \begin{itemize}
        \item Apply Newton's method to the defining polynomial relation
        \item The Hubbard-Schleicher spider algorithm \cite{HS}
    \end{itemize}
    In particular, the Hubbard-Schleicher spider algorithm  
    allows one to compute the $c$ values corresponding to a particular combinatorics of the critical point orbit through successive approximation and can be generalized to both preperiodic orbits and the higher degree unicritical polynomials $f_{d,c}$.  Eberlein's thesis addresses the details for $f_{d,c}$ and the Multibrot set, the generalization of the Mandelbrot set \cite{E}.

    The (algebraic) study of Misurewicz points has relied on studying the polynomials $f^{m+n}(0) - f^n(0)$. These polynomials are critical orbit relations whose roots contain the points of exact period $(m,n)$. However, this is akin to studying the $n$th-roots of unity by examining $x^n-1$ instead of the $n$th cyclotomic polynomial. In this article we provide the equivalent of the $n$th cyclotomic polynomial for Misiurewicz points. This will facilitate the study of algebraic properties of the points with exact period $(m,n)$, i.e., algebraic properties of centers of hyperbolic components.

    While the following is certainly not a complete review of the results on Misiurewicz points, it does illustrate that while there are many results concerning Misiurewicz points, few of them are algebraic in nature.
    Pastor-Romera-Montoya count the real Misiurewicz points for $f_{2,c}$ \cite{PRM2} and Douady-Hubbard show that the complex Misiurewicz points are dense on the boundary of the Mandelbrot set and are branch tips, centers of spirals and points where branches meet \cite{DHI,DHII}. Eberlein's thesis shows for $f_{d,c}$ that the corresponding periodic cycle for Misiurewicz points are repelling \cite{E}. From the combinatorial description of Douday-Hubbard of the dynamics of post-critically finite polynomials by associating to each filled Julia set a Hubbard tree \cite{DHI} Poirier gives a complete classification of arbitrary post-critically finite polynomials \cite{Poirier}. Favre-Gauthier recently studied the distribution of Misiurewicz points with respect to the bifurcation measure and relies on transversality in a crucial way \cite{FG}.

    Even less is known in the bicritical cubic case.
    Silverman \cite{Silverman18} gives an algebraic proof of transversality for $g_{a,v}$ with both critical points periodic or with preperiod at most $1$.

    Fakkhruddin \cite{Fakkhruddin} recently used the simplicity of Misiurewicz points (transversality) to verify the dynamical Mordell-Lang and Manin-Mumford conjectures for generic endomorphisms of $\P^N$.

\section{Unicritical Polynomials}\label{sect_uni}

    In this section we construct a polynomial in $c$ whose roots are exactly the $c$ values for which the critical point $0$ has exact period $(m,n)$ for $f_{d,c}\left( z \right) =z^d+c$. For $m>0$, these are exactly the Misiurewicz points. We first need two lemmas concerning multiplicities of roots.
    Let
    \begin{equation*}
        F_k = \frac{f_{d,c}^{m+k}(0) - f_{d,c}^{m}(0)}{f_{d,c}^{m+k-1}(0) - f_{d,c}^{m-1}(0)}
    \end{equation*}
    which is a polynomial in $c$. We denote $a_k(c)$ as the multiplicity of $c$ as a root of $F_k$.

    \begin{lem}\label{lem1}
        For $k \mid n$
        \begin{equation*}
            a_k(c) = \begin{cases}
                        d-1 & f_{d,c}^{\ell}(0) = 0 \quad \text{and} \quad \ell \mid k,m-1\\
                        1 & \text{otherwise}.
                     \end{cases}
        \end{equation*}
    \end{lem}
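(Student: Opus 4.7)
The plan is to rewrite $F_k$ in closed form and then control its multiplicity at a root via derivatives of the iterate $P_j(c):=f_{d,c}^j(0)$. Since $P_{j+1}=P_j^d+c$, the numerator telescopes to a difference of $d$-th powers, $P_{m+k}(c)-P_m(c)=P_{m+k-1}(c)^d-P_{m-1}(c)^d$, so dividing by the denominator and applying the identity $(x^d-y^d)/(x-y)=\sum_{i=0}^{d-1}x^iy^{d-1-i}=\prod_{\zeta^d=1,\,\zeta\neq1}(x-\zeta y)$ yields
\[
F_k(c)=\sum_{i=0}^{d-1}P_{m+k-1}(c)^i\,P_{m-1}(c)^{d-1-i}=\prod_{\zeta^d=1,\,\zeta\neq 1}\bigl(P_{m+k-1}(c)-\zeta P_{m-1}(c)\bigr).
\]
The product form suggests a case split on whether $P_{m-1}(c_0)$ vanishes. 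If it does, then $0$ is periodic for $f_{d,c_0}$ with some exact period $\ell\mid m-1$, and $F_k(c_0)=0$ forces $P_{m+k-1}(c_0)=0$ as well, hence $\ell\mid k$. Otherwise $P_{m+k-1}(c_0)/P_{m-1}(c_0)$ is a unique nontrivial $d$-th root of unity $\zeta_0$ and exactly one factor vanishes. These alternatives match the two cases of the lemma.

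For the multiplicities I would differentiate the iterate recursion to obtain $P_{j+1}'=dP_j^{d-1}P_j'+1$, which unrolls to $P_n'(c)=1+\sum_{j=1}^{n-1}d^j\prod_{i=n-j}^{n-1}P_i(c)^{d-1}$. In the periodic case, at $c_0$ every product with $j\geq\ell$ contains a factor $P_i(c_0)=0$ for some multiple $i$ of $\ell$ in $[n-j,n-1]$, so only the terms with $j<\ell$ survive; this gives $P_n'(c_0)\equiv 1\pmod d$ and, since the surviving products depend only on $\ell$ consecutive orbit values, periodicity with $\ell\mid k$ forces $P_{m-1}'(c_0)=P_{m+k-1}'(c_0)=:\alpha$. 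The value $\alpha$ is a nonzero algebraic integer (if $\alpha=0$ then $1\equiv 0\pmod d$, impossible for $d\geq 2$). Expanding the sum form of $F_k$ near $c_0$ with $P_{m-1}(c)=\alpha(c-c_0)+O((c-c_0)^2)$ and similarly for $P_{m+k-1}$, each of the $d$ summands contributes an $\alpha^{d-1}(c-c_0)^{d-1}$ leading term, so $F_k(c)=(c-c_0)^{d-1}\bigl(d\alpha^{d-1}+O(c-c_0)\bigr)$ and the multiplicity is exactly $d-1$. In the other case only the factor $P_{m+k-1}-\zeta_0 P_{m-1}$ vanishes at $c_0$, and simplicity reduces to showing $P_{m+k-1}'(c_0)\neq\zeta_0 P_{m-1}'(c_0)$.

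The main obstacle is this last step in the non-periodic case. The congruence $P_n'(c_0)\equiv 1\pmod d$ now only forces the derivative difference to be $\equiv 1-\zeta_0\pmod d$, and $1-\zeta_0$ need not be a unit in the appropriate ring of integers. My plan is to exploit the partial orbit periodicity $P_{n+k}(c_0)=P_n(c_0)$ for $n\geq m$: for $n\geq m+1$ the equality $P_{n+k-1}(c_0)=P_{n-1}(c_0)$ combined with the derivative recursion gives $P_{n+k}'(c_0)-P_n'(c_0)=dP_{n-1}(c_0)^{d-1}\bigl(P_{n+k-1}'(c_0)-P_{n-1}'(c_0)\bigr)$, propagating any discrepancy along the cycle by multiplication by the cycle's multiplier. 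Combined with the single exceptional step at $n=m$, where $P_{m+k-1}(c_0)=\zeta_0 P_{m-1}(c_0)$ replaces equality with a nontrivial twist, this should pin down $P_{m+k-1}'(c_0)-\zeta_0 P_{m-1}'(c_0)$ in terms of $\zeta_0$, $P_{m-1}(c_0)$, and the cycle multiplier, and confirm nonvanishing.
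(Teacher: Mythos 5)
Your factorization of $F_k$ and your case split match the paper's proof, and your treatment of the multiplicity-$(d-1)$ case is correct and actually more self-contained than the paper's: the paper quotes Epstein both for the simplicity of the roots of $f_{d,c}^{\ell}(0)$ and of each factor, whereas you derive $P_{m-1}'(c_0)=P_{m+k-1}'(c_0)=\alpha$ with $\alpha\equiv 1\pmod{d}$ in the ring of integers (hence $\alpha\neq 0$, since $d\geq 2$ is not a unit) and read off the exact leading term $d\alpha^{d-1}(c-c_0)^{d-1}$ directly. (A minor boundary case: for $m=1$ one has $P_{m-1}=P_0\equiv 0$, so your ``both derivatives equal $\alpha$'' step degenerates; but there $F_k=P_k^{d-1}$ outright and the conclusion survives.)

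The genuine gap is the ``otherwise'' case, i.e. showing $P_{m+k-1}'(c_0)\neq\zeta_0 P_{m-1}'(c_0)$ when exactly one factor vanishes. This is the substantive transversality content of the lemma --- it is what makes the Misiurewicz parameters simple roots --- and the paper does not prove it from scratch either: it invokes Epstein's theorem that the zeros of $f_{d,c}^{m+k-1}(0)-\zeta^j f_{d,c}^{m-1}(0)$ are simple. Your congruence gives only $P_{m+k-1}'(c_0)-\zeta_0P_{m-1}'(c_0)\equiv 1-\zeta_0\pmod{d}$, and $1-\zeta_0$ is not a unit (for $d=p$ prime it has norm $p$), so its vanishing yields no contradiction. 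Your fallback plan of propagating the discrepancy along the cycle does not close this: with $\delta_n:=P_{n+k}'(c_0)-P_n'(c_0)$ the recursion gives $\delta_n=dP_{n-1}(c_0)^{d-1}\delta_{n-1}$ for $n\geq m+1$, and the exceptional step at $n=m$ gives $\delta_m=dP_{m-1}(c_0)^{d-1}\zeta_0^{-1}\bigl(P_{m+k-1}'(c_0)-\zeta_0P_{m-1}'(c_0)\bigr)$; so assuming the quantity you want to be nonzero actually vanishes forces $\delta_n=0$ for all $n\geq m$, which is internally consistent (the discrepancy is just scaled by the cycle multiplier each period) and produces no contradiction. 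You need an independent input here --- Epstein's algebraic transversality argument --- and should cite it as the paper does, or supply a genuinely new proof of that step.
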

    \begin{proof}
        Since $f_{d,c}(z) = z^d+c$ we can write
        \begin{equation*}
            f_{d,c}^{m+k}(0) - f_{d,c}^{m}(0) = (f_{d,c}^{m+k-1}(0))^d - (f_{d,c}^{m-1}(0))^d
        \end{equation*}
        so we have
        \begin{equation*}
            F_k = \prod_{0 < j < d} (f_{d,c}^{m+k-1}(0) - \zeta^jf_{d,c}^{m-1}(0))
        \end{equation*}
        where $\zeta$ is a primitive $d\tth$ root of unity.
        Any common zero of
        \begin{equation*}
            (f_{d,c}^{m+k-1}(0) - \zeta^i f_{d,c}^{m-1}(0)) \quad \text{and} \quad (f_{d,c}^{m+k-1}(0) - \zeta^j f_{d,c}^{m-1}(0))
        \end{equation*}
        for $i \neq j$ must be zeros of $f_{d,c}^{m+k-1}(0)$ and $f_{d,c}^{m-1}(0)$. In other words, $0$ is periodic of period dividing both $(m-1)$ and $(m+k-1)$, i.e. both $(m-1)$ and $k$. Such a common zero will, therefore, occur in each of the $(d-1)$ factors $(f_{d,c}^{m+k-1}(0) - \zeta^j f_{d,c}^{m-1}(0))$, and this is the only way that
        \begin{equation*}
            F_k = \frac{f_{d,c}^{m+k}(0) - f_{d,c}^{m}(0)}{f_{d,c}^{m+k-1}(0) - f_{d,c}^{m-1}(0)}
        \end{equation*}
        has a non-simple zero. From Epstein \cite{Epstein} the zeros of $(f_{d,c}^{m+k-1}(0) - \zeta^j f_{d,c}^{m-1}(0))$ are simple and the zeros of $f_{d,c}^{\ell}(0)$ are simple. Thus, the multiplicities of zeros of
        \begin{equation*}
            F_k = \prod_{\zeta^j \neq -1}(f_{d,c}^{m+k-1}(0) - \zeta^jf_{d,c}^{m-1}(0))
        \end{equation*}
        for $f_{d,c}^{\ell}(0)=0$ and $\ell \mid k,m-1$ are
        \begin{equation*}
            (d-1).
        \end{equation*}
        Otherwise, the zeros are simple zeros.
    \end{proof}
    \begin{lem} \label{lem2}
        If $k \mid n$ and $a_k(c) \neq 0$, then $a_k(c) =a_n(c)$.
    \end{lem}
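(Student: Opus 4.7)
The plan is to exploit the factorization
\begin{equation*}
F_k = \prod_{j=1}^{d-1}\bigl(f_{d,c}^{m+k-1}(0) - \zeta^j f_{d,c}^{m-1}(0)\bigr)
\end{equation*}
from the proof of Lemma \ref{lem1}, together with the symmetry $f_{d,c}(\zeta^j z) = f_{d,c}(z)$ (immediate from $\zeta^d = 1$), so that any factor of $\zeta^j$ inside the argument of $f_{d,c}$ is absorbed by one application of $f_{d,c}$. I will also reuse Epstein's simplicity statement \cite{Epstein}: each of the individual factors above has only simple zeros as a polynomial in $c$. With these two tools the proof splits naturally on the value $a_k(c) \in \{1, d-1\}$ supplied by Lemma \ref{lem1}.

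When $a_k(c) = d-1$, Lemma \ref{lem1} tells us that $0$ is periodic under $f_{d,c}$ with some exact period $p \mid \gcd(k,m-1)$. Since $k \mid n$, also $p \mid \gcd(n,m-1)$, so $f_{d,c}^{m-1}(c) = f_{d,c}^{m+n-1}(c) = 0$ and every one of the $d-1$ factors in the analogous product for $F_n$ vanishes at $c$. Each vanishes to simple order by Epstein, giving $a_n(c) = d-1$.

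When $a_k(c) = 1$, the proof of Lemma \ref{lem1} produces a unique $j_0 \neq 0$ with $f_{d,c}^{m+k-1}(c) = \zeta^{j_0} f_{d,c}^{m-1}(c)$ and $f_{d,c}^{m-1}(c) \neq 0$. Writing $n = qk$ and setting $w_i = f_{d,c}^{m+ik-1}(c)$, so that $w_0 = f_{d,c}^{m-1}(c)$ and $w_1 = \zeta^{j_0} w_0$, the recursion $w_{i+1} = f_{d,c}^k(w_i)$ combined with the $\zeta$-symmetry yields $w_2 = f_{d,c}^k(\zeta^{j_0} w_0) = f_{d,c}^k(w_0) = w_1$, and inductively $w_i = w_1$ for all $i \geq 1$. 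Hence $f_{d,c}^{m+n-1}(c) = \zeta^{j_0} f_{d,c}^{m-1}(c)$, so $c$ is a root of exactly the $j_0$-factor of $F_n$ (uniqueness coming from $f_{d,c}^{m-1}(c) \ne 0$), and simple there by Epstein; thus $a_n(c) = 1$.

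The step I expect to be the main obstacle is this propagation in the $a_k(c) = 1$ case: a priori, iterating from $f_{d,c}^{m+k-1}$ to $f_{d,c}^{m+n-1}$ could either shift the ratio $f_{d,c}^{m+n-1}(c)/f_{d,c}^{m-1}(c)$ to a different root of unity (harmless but requiring more care) or, worse, collapse it to $1$, which would force the denominator of $F_n$ to vanish at $c$ and change the multiplicity count entirely. The $\zeta$-symmetry $f_{d,c}(\zeta^{j_0} z) = f_{d,c}(z)$ is precisely what rules both possibilities out by pinning $w_i = w_1$ for every $i \geq 1$.
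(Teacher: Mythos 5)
Your proof is correct, and in the second case it takes a genuinely different route from the paper. The $a_k(c)=d-1$ branch is essentially the paper's argument: the exact period of $0$ divides $\gcd(k,m-1)$, hence $\gcd(n,m-1)$, so Lemma \ref{lem1} applied to $n$ gives $a_n(c)=d-1$. In the $a_k(c)=1$ branch the paper works at the level of periods: $a_k(c)\neq 0$ forces $0$ to have period $(m,k)$, hence $(m,n)$, so $a_n(c)\neq 0$; it then invokes Lemma \ref{lem1} for $n$ and rules out the value $d-1$ by showing that an $\ell \mid n, m-1$ with $f_{d,c}^{\ell}(0)=0$ would also divide $k$, contradicting $a_k(c)=1$. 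You instead track the explicit root of unity: from the factorization of $F_k$ you extract the unique $j_0$ with $f_{d,c}^{m+k-1}(0)=\zeta^{j_0}f_{d,c}^{m-1}(0)$ and use the symmetry $f_{d,c}(\zeta z)=f_{d,c}(z)$ to propagate this relation through the iterates $w_i$, landing on exactly one factor of $F_n$, simple by Epstein. Your route is more explicit and has the virtue of exhibiting directly which factor of $F_n$ vanishes, so $a_n(c)\neq 0$ comes for free rather than needing the separate period-divisibility observation; the paper's version is shorter because it reuses the statement of Lemma \ref{lem1} for $n$ rather than re-entering its proof. Two small points to tidy: you repeatedly write $f_{d,c}^{m-1}(c)$ where you mean the polynomial $f_{d,c}^{m-1}(0)$ evaluated at the parameter value $c$; and for $d=2$ the values $1$ and $d-1$ coincide, so your case split should be read as conditioned on whether $f_{d,c}^{\ell}(0)=0$ for some $\ell\mid k,m-1$ rather than on the numerical value of $a_k(c)$ (your second case genuinely requires $f_{d,c}^{m-1}(0)\neq 0$, which is exactly the failure of that condition, and when it fails the first case applies, so the conclusion is unaffected).
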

    \begin{proof}
        We have two cases. If $a_k(c) = (d-1)$, then $f_{d,c}^{\ell}(0) = 0$ for some $\ell \mid k,m-1$. Thus, $\ell \mid n$ and $a_n(c) = d-1$.

        If $a_k(c)=1$, then $c$ is a zero of $f_{d,c}^{m+k}(0) - f_{d,c}^m(0)$ and $0$ has period $(m,k)$ for $f_{d,c}$. Since $k \mid n$, $c$ also has period $(m,n)$. Now assume that there is an $\ell$ such that $f_{d,c}^{\ell}(0)=0$ and $\ell \mid n,m-1$. Since $0$ is also period ${m,k}$ we also have $\ell \mid k$. But that is a contradiction to $a_k(c) = 1$, so we must have $a_n(c) = 1$.
    \end{proof}

    \begin{proof}[Proof of Theorem \ref{thm1}]
        Assume first that $m=0$. Then we have
        \begin{equation*}
            G_d(0,n) = \Phi_{f,0,n}^{\ast}(0) = \Phi^{\ast}_{f,n}(0) = \prod_{k \mid n} f_{d,c}^k(0)^{\mu(n/k)}.
        \end{equation*}
        We write the principal divisor associated to $G_d(0,n)$ as roots and multiplicities as
        \begin{equation*}
            \Gamma_f(0,n,c) = \sum_{k \mid n} \mu(n/k)a_k(c) (c). 
        \end{equation*}
        By Epstein, \cite{Epstein}, each $f_{d,c}^k(0)=0$ has only simple roots so that $a_k(c) = 1$ for all $k$ where $0$ has period $k$ for $f_{d,c}$ and $0$ otherwise. A standard property of the M\"obius function is that
        \begin{equation}\label{eq_mobius}
            \sum_{k \mid n} \mu(n/k) = \begin{cases}
              0 & n \neq 1\\
              1 & n=1
            \end{cases}.
        \end{equation}
        Thus, by (\ref{eq_mobius}) we have
        \begin{equation*}
            \sum_{k \mid n} \mu(n/k)a_k(c) = \begin{cases}
              0 & 0 \text{ does not have exact period } n \text{ for } f_{d,c}\\
              1 & 0 \text{ has exact period } n\text{ for } f_{d,c}.
            \end{cases}
        \end{equation*}

        Now assume that $m\neq 0$. We use the definition of preperiodic dynatomic polynomials from \cite{Hutz} as
        \begin{align*}
            \Phi^{\ast}_{f,m,n}(0)&= \prod_{k \mid n} \left(\frac{f_{d,c}^{m+k}(0) - f_{d,c}^{m}(0)}{f_{d,c}^{m+k-1}(0) - f_{d,c}^{m-1}(0)}\right)^{\mu(n/k)} = \prod_{k \mid n} F_k^{\mu(n/k)}.
        \end{align*}

        We notate the principal divisor of $F_k$ as $\Gamma_k$ as a zero cycle of roots and multiplicities.
        \begin{equation*}
            \Gamma_k := \sum_{F_k(c) = 0} a_k(c) (c)
        \end{equation*}
        where $a_k(c)$ is the multiplicity of $c$ in $F_k$.

        We can then write the zero cycle associated to $\Phi^{\ast}_{f,m,n}$ as
        \begin{equation*}
             \sum_{k \mid n} \mu(n/k)\Gamma_k.
        \end{equation*}
        We need to consider the following sum of multiplicities for every $c$ with $F_n(c) = 0$
        \begin{equation} \label{eq5}
            \sum_{k \mid n} \mu(n/k)a_k(c)
        \end{equation}
        and show that the only non-zero values are those for $0$ with exact period $(m,n)$ for $f_{d,c}$ and that multiplicity is $1$.

        By Lemmas \ref{lem1} and \ref{lem2} (and possible scaling $n$), (\ref{eq5}) is a sum of constants (either $1$ or $(d-1)$). By (\ref{eq_mobius})
        we have (\ref{eq5}) is 0 unless the sum has a single non-zero term. That occurs in the multiplicity $(d-1)$ case when $n$ is the exact period of $0$ for $f_{d,c}$ and in the simple root case when $0$ has exact period $(m,n)$ for $f_{d,c}$
        \begin{equation*}
            \sum_{k \mid n} \mu(n/k)a_k(c) =\begin{cases}
                            d-1 & 0 \text{ has exact period $n$ and } n \mid m-1\\
                            1 & 0\text{ has exact period } (m,n).
                         \end{cases}
        \end{equation*}
        In the multiplicity $(d-1)$ case, this $c$ value has $0$ with exact period $(0,n)$, so by the $m=0$ case we must divide by
        \begin{equation*}
            (\Phi^{\ast}_{f,0,n})^{d-1}.
        \end{equation*}
    \end{proof}
    From the formula in Theorem \ref{thm1}, it is straightforward to count the Misiurewicz points for $f_{d,c}(z)= z^d+c$. However, note that for $d>2$, the form $f_{d,c}$ provides a $(d-1)$-to-1 cover of the parameter space of unicritical polynomials
    \begin{cor}\label{cor1}
        The number of $(m,n)$ Misiurewicz points for $f_{d,c}$ is
        \begin{equation*}
            M_{m,n} = \begin{cases}
              \sum_{k \mid d} \mu(n/k)d^{k-1} & m=0\\
              (d^m - d^{m-1}-d+1)\sum_{k \mid d} \mu(n/k)d^{k-1} & m \neq 0 \text{ and } n \mid (m-1)\\
              (d^m - d^{m-1})\sum_{k \mid d} \mu(n/k)d^{k-1} & \text{otherwise}
            \end{cases}
        \end{equation*}
    \end{cor}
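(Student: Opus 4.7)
The strategy is to leverage Theorem \ref{thm1}, which identifies the Misiurewicz points of exact period $(m,n)$ for $f_{d,c}$ with the roots of the polynomial $G_d(m,n)\in\C[c]$, all of which are simple. Since the roots are simple, the count $M_{m,n}$ is simply $\deg_c G_d(m,n)$, and so the entire corollary reduces to a degree computation.

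First I would record the degrees of the basic building blocks. By induction on $j$ using $f_{d,c}^{j+1}(0) = f_{d,c}^j(0)^d + c$, the polynomial $f_{d,c}^j(0)\in\C[c]$ has degree $d^{j-1}$ for $j\geq 1$ (and is the zero polynomial for $j=0$). Since the leading term of $f_{d,c}^j(0)$ in $c$ strictly dominates, for $j > i \geq 1$ we get $\deg(f_{d,c}^j(0)-f_{d,c}^i(0)) = d^{j-1}$, and for $i=0$ the difference is $f_{d,c}^j(0)$ itself of degree $d^{j-1}$. Setting as before
\begin{equation*}
F_k = \frac{f_{d,c}^{m+k}(0)-f_{d,c}^m(0)}{f_{d,c}^{m+k-1}(0)-f_{d,c}^{m-1}(0)},
\end{equation*}
a short case split (separating $m=1$ from $m\geq 2$) gives $\deg F_k = (d-1)d^{m+k-2}$ uniformly for $m\geq 1$, $k\geq 1$.

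Taking the M\"obius-weighted products in the definitions of the generalized dynatomic polynomials then yields
\begin{align*}
\deg \Phi^{\ast}_{f,0,n}(0) &= \sum_{k\mid n}\mu(n/k)\,d^{k-1},\\
\deg \Phi^{\ast}_{f,m,n}(0) &= (d-1)d^{m-1}\sum_{k\mid n}\mu(n/k)\,d^{k-1} = (d^m-d^{m-1})\sum_{k\mid n}\mu(n/k)\,d^{k-1}.
\end{align*}
Reading off $\deg G_d(m,n)$ from its definition in Theorem \ref{thm1} now gives each of the three cases. In particular, when $m\neq 0$ and $n\mid(m-1)$, subtracting $(d-1)\deg\Phi^{\ast}_{f,0,n}(0)$ from the second expression produces the factor $(d^m-d^{m-1}-d+1)$, matching the stated formula.

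The only subtle point in all of this is to be sure that no leading-term cancellation occurs in the numerators and denominators computed above (which rests on the strict inequality $d^{m+k-1}>d^{m-1}$ for $k\geq 1$) and, more importantly, that the formal quotient defining $G_d(m,n)$ is actually a polynomial so that the arithmetic of degrees is valid; this last fact is exactly the content of Theorem \ref{thm1}, which is invoked here as a black box. Given Theorem \ref{thm1}, the rest is bookkeeping.
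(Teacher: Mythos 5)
Your proposal is correct and follows essentially the same route as the paper: both reduce the count to $\deg_c G_d(m,n)$ via the simplicity of roots from Theorem \ref{thm1}, compute $\deg f_{d,c}^j(0)=d^{j-1}$, take the M\"obius-weighted sum of degrees to get $\deg\Phi^{\ast}_{f,m,n}(0)$, and subtract $(d-1)\deg\Phi^{\ast}_{f,0,n}(0)$ in the case $n\mid(m-1)$. Your explicit attention to the absence of leading-term cancellation and to $F_k$ being a genuine polynomial is a slightly more careful version of the same bookkeeping (and, incidentally, the sums in the stated formula should read $\sum_{k\mid n}$, as both you and the paper's proof actually use).
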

    \begin{proof}
        We take $\Phi_{f,m,n}(0)=f_{d,c}^{m+n}(0) - f_{d,c}^m(0)$ so that
        \begin{equation*}
            \deg(\Phi_{f,m,n}(0)) = d^{m+n-1}.
        \end{equation*}
        Notice the $-1$ in the exponent, which occurs since the first iterate only contains $c$, and the second iterate $c^d$, etc.

        If $m=0$, then
        \begin{align*}
             \deg(\Phi^{\ast}_{f,0,n}(0))
                &= \sum_{k \mid n} \mu(n/k) \deg(\Phi_{f,0,k})\\
                &= \sum_{k \mid n} \mu(n/k) d^{k-1}.
        \end{align*}
        If $m \neq 0$, then
        \begin{align*}
            \deg(\Phi^{\ast}_{f,m,n}(0))
                &= \sum_{k \mid n} \mu(n/k) \left(\deg(\Phi_{f,m,k}) - \deg(\Phi_{f,m-1,k})\right)\\
                &= \sum_{k \mid n} \mu(n/k) \left(d^{m+k-1} - d^{m+k-2}\right)\\
                &= (d^{m} - d^{m-1})\sum_{k \mid n} \mu(n/k) d^{k-1}.
        \end{align*}
        If in addition $n \mid (m-1)$, then we must subtract
        \begin{equation*}
            (d-1)\deg(\Phi_{f,0,n}^{\ast}) = (d-1)\sum_{k \mid n} \mu(n/k) d^{k-1}.
        \end{equation*}
    \end{proof}

    While it is an easy deduction from the defining equations, it is worth emphasizing that Misiurewicz points are algebraic integers.
    \begin{cor}\label{cor2}
        All Misiurewicz points are algebraic integers.
    \end{cor}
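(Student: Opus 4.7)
The plan is to show that, after writing $G_d(m,n)$ as a polynomial in $c$, its leading coefficient is $\pm 1$ and all of its remaining coefficients lie in $\Z$. Since Theorem \ref{thm1} identifies the Misiurewicz points with the roots of $G_d(m,n)$, this implies each Misiurewicz point is integral over $\Z$.

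First I would verify, by induction on $k$, that $f_{d,c}^k(0) \in \Z[c]$ and is monic of degree $d^{k-1}$ in $c$ (the constant case $k=1$ gives $c$, and $f_{d,c}^{k}(0) = (f_{d,c}^{k-1}(0))^d + c$). Consequently each numerator $f_{d,c}^{m+k}(0) - f_{d,c}^m(0)$ and each denominator $f_{d,c}^{m+k-1}(0) - f_{d,c}^{m-1}(0)$ appearing in the definition of $\Phi^{\ast}_{f,m,n}(0)$ lies in $\Z[c]$ and is monic; dividing, the polynomial $F_k$ (polynomial by Lemma \ref{lem1}) is in $\Z[c]$ and monic. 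Taking the Möbius-weighted product and using the fact proved in \cite{Hutz} that $\Phi^{\ast}_{f,m,n}$ is an honest polynomial (rather than a rational function), I obtain that $\Phi^{\ast}_{f,m,n}(0) \in \Z[c]$ is monic, of the degree computed in Corollary \ref{cor1}.

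In the non-trivial case $m \neq 0$ with $n \mid (m-1)$, Theorem \ref{thm1} asserts that $\Phi^{\ast}_{f,0,n}(0)^{d-1}$ divides $\Phi^{\ast}_{f,m,n}(0)$ as polynomials. Since both are monic elements of $\Z[c]$ and the division is exact in $\Q[c]$, Gauss's lemma implies the quotient $G_d(m,n)$ lies in $\Z[c]$ and is itself monic. Thus in every case $G_d(m,n)$ is a monic polynomial in $\Z[c]$, so every root is an algebraic integer, proving the corollary.

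I do not expect any real obstacle here: the only point requiring care is ensuring integrality after the Möbius-weighted division, which is handled by the polynomiality statement from \cite{Hutz} combined with Gauss's lemma.
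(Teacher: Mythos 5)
Your proof is correct and follows the same route as the paper, which simply asserts that the polynomials $G_d(m,n)$ are monic; you supply the details (integrality and monicity of the iterates $f_{d,c}^k(0)$, of the factors $F_k$, and of the quotient via Gauss's lemma) that the paper leaves implicit.
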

    \begin{proof}
        The polynomials $G_d(m,n)$ are monic.
    \end{proof}

    While it is believed that for $d=2$, $G_2(0,n)$ is irreducible for all $n$, it is not clear what happens for $m >0$ or for $d>2$. Figure \ref{figure1} shows that the irreducibility behavior appears quite complicated and it would interesting to understand for which $(m,n)$ and $d$ $G_d(m,n)$ is irreducible.

    \begin{figure}\label{figure1}
    \caption{Irreducibility of $G_d(m,n)$ For Pairs $(m,n)$}
    \begin{equation*}
        \begin{tabular}{|l|l|l|}
            \hline
            $d$ & Irreducible & Reducible\\
            \hline
            2 & $\{(m,n) \mid 0 \leq m \leq 4, 1 \leq n \leq 4\}$ & $\emptyset$\\
            \hline
            3 & $\{(m,n) \mid 0 \leq m \leq 3, 1 \leq n \leq 3\} \cup (4,2)$ & $\emptyset$\\
            \hline
            4 & $(0,3), (2,3), (3,2)$  & $(0,2), (2,1),(2,2),(3,1),(4,1)$\\
              &     & (2 factors)\\
            \hline
            5 & $(0,2),(0,3), (2,1), (3,1), (2,2)$ & $\emptyset$ \\
            \hline
            6 & (0,3) & (0,2),(2,1),(3,1),(2,2)\\
              &     & (3 factors)\\
            \hline
            7 & (2,1), (3,1), (2,2)  & (0,2), (0,3)\\
              &               & (3 factors)\\
            \hline
            8 & (0,3) & (0,2),(2,1)\\
            \hline
            9 & (0,2),(0,3) & (2,1)\\
            \hline
            10 & (0,3) & (0,2), (2,1)\\
            \hline
        \end{tabular}
    \end{equation*}
    \end{figure}

\begin{code}
\begin{python}
R.<c>=PolynomialRing(QQ,1)
d=5
P1.<x,y>=ProjectiveSpace(R,1)
H=Hom(P1,P1)
f=H([x^d + c*y^d,y^d])

Gleason(f,[0,2]).factor()
\end{python}
\end{code}

\section{Failure of Transversality over finite fields for unicritical polynomials} \label{sect_thurston}
    For the maps $f_{d,c}(z)$, Thurston's transversality statement says that $G_d(m,n)$ has only simple roots.  Since $G_d(m,n)$ is a polynomial in $c$, we can compute its discriminant. A polynomial discriminant is non-zero if and only if the polynomial has simple roots. Since the discriminant is an integer it will have prime divisors, unless of course it is $0, \pm 1$.
    \begin{defn}
        \begin{equation*}
            D_d(m,n) = \Disc(G_d(m,n)).
        \end{equation*}
    \end{defn}
    Thus, for each $n$ we will get some set of primes that divide $D_d(m,n)$ and, hence, primes where transversality fails over $\overline{F_p}$.  This problem is originally posed by Silverman \cite{Silverman20} to examine the primes dividing the discriminants of $f_{d,c}^n(0)$. The two problems are in fact equivalent. We provide the first steps towards a resolution by translating the problem into a problem about periodic points of dynamical systems in two different ways, one of which is related to the dynamical Manin-Mumford problem.

    Here are the first few $D_2(0,n)$
    \begin{equation*}
        \begin{tabular}{|l|l|}
        \hline
        n & $D_2(0,n)$\\
        \hline
        1 & 1 \\
        \hline
        2 & 1 \\
        \hline
        3 & $-1 \cdot 23$\\
        \hline
        4 & $23 \cdot 2551$\\
        \hline
        5 & $13 \cdot 24554691821639909$ \\
        \hline
        6 & $-1 \cdot 13^2 \cdot 949818439 \cdot 6488190752068386528993226361$ \\
        \hline
        7 & $-1 \cdot 8291 \cdot 9137 \cdot 420221 \cdot 189946395389 \cdot$\\
          & $4813162343551332730513 \cdot 2837919018511214750008829 \cdot$\\
          & $1858730157152877176856713108209153714699601$ \\
        \hline
        \end{tabular}
    \end{equation*}
    Computing the discriminant and factoring could be computed for a few more $n$, but certainly $n=15$ is already out of reach of reasonable computing power since the discriminant is growing with the degree of $D_d(m,n)$ which is growing doubly exponentially. Further data was generated using the reformulating as a dynamical system and the computer algebra system Sage \cite{sage}. For example, the following are all primes $p < 10000$ with $n < 100$ and $p \mid \mid D_2(0,n)$, i.e., $p \mid D_2(0,n)$ and $p^2 \nmid D_2(0,n)$.
    \begin{equation*}
        \begin{tabular}{|l|l||l|l|}
            \hline
            $n$ & $p$ & $n$ & $p$\\
            \hline
            3 & 23  & 24 & 2087, 3973\\
            \hline
            4 & 24, 2551 & 25 & 1289, 6449\\
            \hline
            5 & 13 & 27 & 2287\\
            \hline
            7 & 8291, 9137 & 31 & 3467, 2358\\
            \hline
            8 & 1433 & 32 & 1097\\
            \hline
            9 & 137 & 33 & 2153\\
            \hline
            11  & 547, 1613 & 35 & 1777\\
            \hline
            12 & 211 & 36 & 2953\\
            \hline
            13 & 317 & 37 & 5023\\
            \hline
            14 & 431, 2179 & 49 & 8693\\
            \hline
            19 & 251, 1249 & 63 & 5581\\
            \hline
        \end{tabular}
    \end{equation*}
    Notice that an entry for $n=6$ and $p=13$ is missing from the table. This is because over $\F_{13}$ transversality holds, but over $\F_{13^2}$ it does not, i.e., the double root is over an extension of $\F_{13}$, so we will have $p^2 \mid D_2(0,6)$. Such primes are not included in the table since it was infeasible to extend the brute-force search over field extensions for large primes. In Proposition \ref{prop2} we describe the power of $p$ dividing $D_n$.

    From this computational data, there are several things that seem to be true. It seems like there is a primitive prime divisor for every $n \geq 3$, that is a prime which appears at $n$ but not for any $i < n$, and that the density of primes which occurs is $0$. However, both of these statements seems quite difficult to prove due mainly to the possibility of the $c$ value occurring in some arbitrarily large algebraic extension. Silverman proposed an addition question:
    Fix a prime $p$ and consider the set
    \begin{equation*}
        S_p = \{ n \col \Disc(f_{d,c}^n(0)) \equiv 0 \pmod{p}\}.
    \end{equation*}
    Since $f_{d,c}^{kn}(0) \mid f_{d,c}^n(0)$ for all $k$ (since they divide in $\Z[c]$) is $S_p$ multiplicatively finitely generated? In other words, is there a finite set $\{n_1,\ldots,n_r\}$ such that $S_p = \cup_i n_i\N$. The difficulty with the question is that as $n$ increases so does the degree of the extension $\F_{p^k}$ where the double root would be defined, see Corollary \ref{cor:Conditional_bound}. While it seems possible that a double root could exist over a very large extension, the probability that it occurs decreases with $k$. We show that the power of $p$ dividing $D_n$ is a multiple of $d-1$, so that $d-1$ is the typical occurrence, and the particular field extensions where the double roots occur depends on how $x^n-1$ splits in $\F_p$. Surprisingly, $p=13$ for $n=6$ and $d=2$ is the only example we have, other than the special primes $p \mid d-1$ described in Corollary \ref{cor3}, for which the power is $t(d-1)$ for $t> 1$. Perhaps this is merely reflection of probability, or perhaps, there are only finitely many exceptions. If one could bound $t$ for which $p^{t(d-1)}$ occurs, then the answer to Silverman's question would be yes, each prime divides only finitely many $D_n$. However, it is unclear whether such a bound should exist. To illustrate that $p^{d-1}$ phenomenon, here are the first few $D_n$ for $d=3$
    \begin{equation*}
        \begin{tabular}{|l|l|}
        \hline
        n & $D_3(0,n)$\\
        \hline
        1 & 1 \\
        \hline
        2 & $-1 \cdot 2^2$ \\
        \hline
        3 & $2^8 \cdot 229^2$\\
        \hline
        4 & $2^{24} \cdot 41^2 \cdot 101^2 \cdot 1163^2 \cdot 2136287^2$\\
        \hline
        5 & $2^{80} \cdot 23^2 \cdot$\\
          & $432725499932442096897356758710872300429969741351525112750876791^2$\\
        \hline
        \end{tabular}
    \end{equation*}
    The powers of $2$ are explained by Corollary \ref{cor3} and the exponents of $2 = (d-1)$ are explained by Proposition \ref{prop3}.

\subsection{Power of $p$ dividing $D_d(m,n)$}
    We first recall a little algebraic number theory.
    \begin{thm}[\textup{\cite[\S III.3]{LangANT}}] \label{thm:disc}
        Let $K/\Q$ be an algebraic number field of degree $n$ with ring of integers $\O_K$ and discriminant $D_K$. Let $\omega \in \O_K$ with minimal polynomial $w(x)$. Then
        \begin{equation*}
            \Disc(w(x)) = [\O_K \col \Z[\omega]]^2 D_K.
        \end{equation*}
    \end{thm}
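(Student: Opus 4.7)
The plan is to identify $\Disc(w(x))$ with the discriminant of the $\Z$-lattice $\Z[\omega]\subset \O_K$, and then exploit multiplicativity of lattice discriminants under change of $\Z$-basis.

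First I would recall that for any $\Q$-basis $\alpha_1,\ldots,\alpha_n$ of $K$, the discriminant admits the two equivalent expressions
\[
    \disc(\alpha_1,\ldots,\alpha_n) = \det\bigl(\Tr_{K/\Q}(\alpha_i\alpha_j)\bigr)_{i,j} = \det\bigl(\sigma_i(\alpha_j)\bigr)^2,
\]
where $\sigma_1,\ldots,\sigma_n\colon K\hookrightarrow \overline{\Q}$ are the embeddings. Applied to the power basis $1,\omega,\omega^2,\ldots,\omega^{n-1}$, the matrix $\bigl(\sigma_i(\omega^{j-1})\bigr)$ is Vandermonde in the conjugates $\omega_i:=\sigma_i(\omega)$, whose squared determinant is $\prod_{i<j}(\omega_i-\omega_j)^2$. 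This is precisely $\Disc(w(x))$, so $\Disc(w(x)) = \disc(1,\omega,\ldots,\omega^{n-1})$.

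Next, I would record the change-of-basis formula: if $\beta_i = \sum_j m_{ij}\alpha_j$ for an invertible matrix $M=(m_{ij})$, then the trace form transforms as a symmetric bilinear form, yielding
\[
    \disc(\beta_1,\ldots,\beta_n) = \det(M)^2\,\disc(\alpha_1,\ldots,\alpha_n).
\]
Now fix any $\Z$-basis $\eta_1,\ldots,\eta_n$ of $\O_K$ and write the power basis as $(1,\omega,\ldots,\omega^{n-1})^T = M\cdot(\eta_1,\ldots,\eta_n)^T$ with $M\in M_n(\Z)$. Applying the formula gives $\Disc(w(x)) = \det(M)^2\,D_K$, so it remains only to identify $|\det M|$ with the index $[\O_K:\Z[\omega]]$.

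The key step is this index identification, which is the only place where the integral lattice structure (as opposed to mere linear algebra over $\Q$) enters. I would invoke the Smith normal form: write $M = U D V$ with $U,V\in\GL_n(\Z)$ and $D = \operatorname{diag}(d_1,\ldots,d_n)$. Then $\O_K/\Z[\omega]$ is isomorphic to $\bigoplus_i \Z/d_i\Z$, so $[\O_K:\Z[\omega]] = |d_1\cdots d_n| = |\det D| = |\det M|$. Substituting, $\Disc(w(x)) = [\O_K:\Z[\omega]]^2\,D_K$, as desired. The remainder of the argument is the Vandermonde identity and bookkeeping for the transformation law of the trace form, neither of which presents a real obstacle.
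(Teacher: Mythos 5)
Your argument is correct and is essentially the standard proof of this classical fact; the paper itself gives no proof, citing it directly from Lang's \emph{Algebraic Number Theory} \S III.3, where the same chain (Vandermonde identification of $\Disc(w)$ with $\disc(1,\omega,\ldots,\omega^{n-1})$, the $\det(M)^2$ transformation law for the trace form, and the index--determinant identification) is used. The only point worth making explicit is the implicit hypothesis that $\deg w = n$, i.e.\ $K=\Q(\omega)$, without which $1,\omega,\ldots,\omega^{n-1}$ is not a $\Q$-basis and $[\O_K\col\Z[\omega]]$ is infinite; this is tacitly assumed in the statement and is satisfied in the paper's application to roots of irreducible factors of $G_d(m,n)$.
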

    \begin{defn}
        A prime $p$ which divides the index $i(\omega) = [\O_K \col \Z[\omega]]$ for all $\omega \in \O_K$ is called an \emph{inessential discriminant divisor}.
    \end{defn}
    An inessential discriminant divisor is an obstruction to $K$ being monogenic, i.e. having an integral power basis. This is important since in the monogenic case we have Dedekind's theorem for how rational primes split in number fields (see \cite{LangANT}, Chapter 1, Proposition 25). It is interesting to note that all computed examples of field extensions generated by irreducible factors of $G_d(m,n)$ were monogenic.

    \begin{prop} \label{prop2}
        Let $g(c)$ be an irreducible component of $G_d(m,n)$ with discriminant $D$. Let $K$ be the extension of $\Q$ by $g(c)$ with ring of integer $\O_K$. Assume that $p^k \mid D$ and $p \nmid i(c_0)=[\O_K\col \Z[c_0]]$, where $c_0$ is a root of $g(c)$.
        Then,
        \begin{equation*}
            k \geq \sum_i (e_i-1)f_i
        \end{equation*}
        where the $e_i$ are the ramification indices and $\deg(q_i) = f_i$ are the residue degrees for $K$.
    \end{prop}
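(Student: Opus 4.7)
The plan is to combine Theorem \ref{thm:disc} with the classical relation between the field discriminant $D_K$ and the different ideal $\mathfrak{d}_{K/\Q}$. The hypothesis $p \nmid i(c_0)$ is the key input: it ensures that the polynomial discriminant $D$ carries exactly the same $p$-adic information as $D_K$, so the desired lower bound on the exponent $k$ reduces to a standard ramification estimate in $K$.

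First I would apply Theorem \ref{thm:disc} to write $D = i(c_0)^2 \, D_K$. Since $p \nmid i(c_0)$, the index squared contributes nothing at $p$, giving $v_p(D) = v_p(D_K)$. Reading the hypothesis ``$p^k \mid D$'' as exact divisibility (which is forced by the direction of the inequality in the conclusion), this means $v_p(D_K) = k$.

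Next I would invoke the identity $D_K = \pm N_{K/\Q}(\mathfrak{d}_{K/\Q})$. Writing the prime decomposition $p\O_K = \prod_i q_i^{e_i}$ and $\mathfrak{d}_{K/\Q} = \prod_i q_i^{d_i}$, taking $p$-adic valuations of the norm gives
\begin{equation*}
v_p(D_K) = \sum_i d_i f_i.
\end{equation*}
The classical ramification bound $d_i \geq e_i - 1$ (with equality precisely when $p \nmid e_i$, i.e., tame ramification) then yields
\begin{equation*}
k = v_p(D_K) = \sum_i d_i f_i \geq \sum_i (e_i - 1) f_i,
\end{equation*}
as claimed.

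There is no real obstacle; the argument is essentially bookkeeping in standard algebraic number theory, with the coprimality assumption $p \nmid i(c_0)$ playing the sole substantive role by allowing one to pass from the polynomial discriminant $D$ directly to the field discriminant $D_K$. The only subtlety is the interpretation of ``$p^k \mid D$'' as exact divisibility; under that reading the chain of equalities closes cleanly and the inequality on the different exponents supplies the rest.
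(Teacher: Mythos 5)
Your proof is correct and follows essentially the same route as the paper's: both use the hypothesis $p \nmid i(c_0)$ to identify the $p$-part of $D$ with that of the field discriminant $D_K$ and then apply the standard ramification lower bound $v_p(D_K) \geq \sum_i (e_i-1)f_i$, with equality in the tame case. The only difference is cosmetic --- the paper invokes Dedekind's theorem and cites Serre for the inequality, whereas you unpack that citation explicitly via the norm of the different and the bound $d_i \geq e_i - 1$; your remark that ``$p^k \mid D$'' must be read as exact divisibility matches the paper's proof, which introduces the exact exponent $k_0$ for precisely this reason.
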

    \begin{proof}
        Since $p \nmid i(c_0)$ we apply Dedekind's theorem to get the factorization of $p$ in $\O_K$ from
        \begin{equation*}
            g(c) \equiv w_1^{e_1}\cdots w_g^{e_g} \pmod{p}
        \end{equation*}
        for some irreducible polynomials $w_i$. So that $p$ ramifies in $\O_{K}$ if and only if it divides $D$. The $e_i$ are the ramification indices and $\deg(w_i) = f_i$ are the residue degrees.

        Let $k_0$ be such that $p^{k_0} \mid D$ but $p^{k_0+1} \nmid D$. We have \cite[III \S 6]{Serre2}
        \begin{equation*}
            k_0 \geq \sum_i (e_i-1)f_i
        \end{equation*}
        with equality if $\gcd(e_i,p)=1$ for all $i$.
    \end{proof}
    It is well known that the discriminant of a product has the following form
    \begin{equation} \label{eq6}
        \Disc(fg) = \Disc(f)\Disc(g)\Res(f,g)^2.
    \end{equation}
    Thus, we can use Proposition \ref{prop2} to examine the powers of $p$ dividing $G_n$ be examining each irreducible factor, see Example \ref{exmp1}.

    \begin{rem}
        While it is believed that $G_2(0,n)$ is irreducible, it is clear from Figure \ref{figure1} that $G_n$ is not always irreducible. Thus, it is possible that there can be contributions from the resultant terms, see Example \ref{exmp1}.
    \end{rem}

    \begin{exmp}
        $13^2 \mid D_2(0,6)$ since it has a multiple root over $\F_{13^2}$. In particular, there are $4$ primes that lie above $13$ for $n=6$. Three of them have $e=1$ and the fourth has $e = 2$ and $f=2$. Giving a total exponent of $2$. Or, we could see that
        \begin{align*}
            G_2(0,6) &\equiv (c + 9)(c^2 + 3c + 1)^2(c^4 + c^3 + 4c^2 + 12c + 3)\\
                &(c^{18} +10c^{17} + 3c^{16} + 8c^{15} + 2c^{14} + c^{13} + 10c^{12} + 3c^{11} + 2c^{10} +\\
                &10c^9 + 11c^8 + 9c^7 + 5c^6 + 11c^5 + 4c^4 + 6c^3 + 4c^2 + 12c+ 1) \pmod{13}.
        \end{align*}
        Notice the degree $2$ polynomial which occurs to the power $2$.
    \end{exmp}

    \begin{prop} \label{prop3}
        For each prime divisor $p^k \mid D_d(0,n)$, $p \nmid i(c_0)$ for all roots $c_0$ of $G_d(0,n)$, we have $(d-1) \mid k$. Furthermore, the number of multiple roots is the determined by the factorization of $x^{d-1}-1$ modulo $p$, i.e., the $(d-1)\sst$ roots of unity modulo $p$.
    \end{prop}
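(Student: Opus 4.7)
The plan is to leverage the $\mu_{d-1}$-symmetry of the family $f_{d,c}(z) = z^d + c$.  A direct calculation gives $f_{d,\zeta c}(\zeta z) = \zeta f_{d,c}(z)$ for $\zeta \in \mu_{d-1}$, and induction on $k$ then shows $f_{d,\zeta c}^k(0) = \zeta f_{d,c}^k(0)$ for all $k \geq 1$.  The root set of $G_d(0,n)$ is therefore stable under $c \mapsto \zeta c$, and for $n \geq 2$ the value $c = 0$ is excluded (it would make $0$ a fixed point).  The roots thus partition into $\mu_{d-1}$-orbits of size $d-1$, each an orbit of $(d-1)$st roots of a common element of $\overline{\Q}$, so
\[
G_d(0,n)(c) \;=\; H(c^{d-1})
\]
for a unique monic $H \in \Z[y]$ of degree $\deg G_d(0,n)/(d-1)$.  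The case $n=1$ is trivial since $G_d(0,1) = c$ and $D_d(0,1) = 1$.

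The next step is to show $H(0) = 1$.  Writing $f_{d,c}^k(0) = c\, u_k(c)$ with $u_k \in \Z[c]$ and $u_k(0) = 1$ (clear by induction from $f_{d,c}^k(0) = c + O(c^2)$), the M\"obius identity $\sum_{k \mid n}\mu(n/k) = 0$ for $n > 1$ yields
\[
G_d(0,n)(c) \;=\; \prod_{k \mid n} u_k(c)^{\mu(n/k)},
\]
which evaluates to $1$ at $c = 0$; hence $H(0) = 1$.

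With this in hand I would invoke the standard discriminant identity for polynomials of the form $H(c^m)$, namely
\[
\Disc(H(c^{d-1})) \;=\; \pm (d-1)^{(d-1)\deg H}\, H(0)^{d-2}\, \Disc(H)^{d-1}.
\]
Substituting $H(0)=1$ gives $D_d(0,n) = \pm (d-1)^{(d-1)\deg H}\,\Disc(H)^{d-1}$, so
\[
v_p(D_d(0,n)) \;=\; (d-1)\bigl(\deg H \cdot v_p(d-1) + v_p(\Disc H)\bigr)
\]
is divisible by $d-1$ for every prime $p$, which is the first assertion.  For the second, the derivative relation $G_d(0,n)'(c) = (d-1)\,c^{d-2}\,H'(c^{d-1})$, combined with $H(0) \neq 0$ and $p \nmid d-1$, shows that the multiple roots of $G_d(0,n)$ in $\overline{\F_p}$ are exactly the $(d-1)$st roots of the multiple roots $\alpha_0 \neq 0$ of $H$ modulo $p$.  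Each such $\alpha_0$ lifts to a complete $\mu_{d-1}$-orbit of $d-1$ multiple roots of $G_d(0,n)$, and the splitting of these orbits across the finite subfields $\F_{p^r} \subset \overline{\F_p}$ is dictated by the cyclic group $\mu_{d-1}(\overline{\F_p})$, equivalently by the irreducible factorization of $x^{d-1}-1$ modulo $p$.

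The main obstacle is the identity $G_d(0,n)(0) = 1$: the M\"obius-cancellation proof is clean but requires writing $f_{d,c}^k(0) = c\,u_k(c)$ precisely and handling the negative M\"obius exponents, where the intermediate product of the $u_k$'s is a priori a rational function and is only known to be polynomial because $G_d(0,n)$ is.  Once that is in place, the divisibility $(d-1)\mid k$ and the description of multiple roots via $\mu_{d-1}$-orbits follow mechanically from the discriminant identity and the derivative relation.
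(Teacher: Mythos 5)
Your proof is correct, but it takes a genuinely different route from the paper's. The paper substitutes $t = c^{d-1}$ into $f_{d,c}^n(0)/c$ to obtain a polynomial $h(t)$, applies the ramification-theoretic bound of Proposition \ref{prop2} (hence Dedekind's theorem and the hypothesis $p \nmid i(c_0)$) to $h$, multiplies the resulting exponent by $d-1$, and passes back to $D_d(0,n)$ via the multiplicativity $\Disc(f_{d,c}^n(0)) = \prod_{k \mid n} D_d(0,k)$ of Corollary \ref{cor:disc_mult}. You instead work directly with $G_d(0,n)$: the equivariance $f_{d,\zeta c}^k(0) = \zeta f_{d,c}^k(0)$ together with $\sum_{k \mid n}\mu(n/k) = 0$ gives the polynomial identity $G_d(0,n)(\zeta c) = G_d(0,n)(c)$ for $n \geq 2$ (slightly more direct than your root-set argument), hence $G_d(0,n) = H(c^{d-1})$ with $H$ monic, $H(0) = 1$, and the composition formula for discriminants yields $D_d(0,n) = \pm (d-1)^{(d-1)\deg H}\,\Disc(H)^{d-1}$. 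This buys more than the paper proves: the divisibility $(d-1) \mid v_p(D_d(0,n))$ holds for \emph{every} prime, unconditionally, with no index hypothesis and no algebraic number theory, and it is consistent with all the tabulated data (e.g.\ $D_7(0,3) = 2^{48} 3^{54} 19^{12} 14731^{6}$ with $d-1 = 6$). What the paper's route buys is the explicit link to the ramification data $(e_i, f_i)$, which is reused in Corollary \ref{cor3}. Two small caveats: your derivative argument $G_d(0,n)'(c) = (d-1)c^{d-2}H'(c^{d-1})$ for locating multiple roots requires $p \nmid d-1$ (the excluded case is degenerate for $x^{d-1}-1$ and is handled separately by Corollary \ref{cor3}); and the evaluation of $\prod_{k\mid n} u_k^{\mu(n/k)}$ at $c=0$ should be justified by clearing denominators, writing $G_d(0,n)\cdot \prod_{\mu(n/k)=-1} u_k = \prod_{\mu(n/k)=1} u_k$, which is legitimate precisely because each $u_k(0) = 1 \neq 0$. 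Neither caveat is a gap in substance.
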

    \begin{proof}
        Replacing $c^{d-1}$ with $t$ we can write $f_{d,c}^n(0)/c$ as a polynomial $h(t)$. So any multiple root of $f_{d,c}^n(0)$ is a multiple root of $h(t)$ and we apply the methods of Proposition \ref{prop2} to $h(t)$. Finally, we take the resulting power and multiply by $(d-1)$, (undoing the $c^{d-1} \mapsto t$ substitution). Thus, the power of $p$ dividing $D_d(0,n)$ must be a multiple of $d-1$.

        Each root of $h(t)$ upon reverting to $c$ corresponds to some number of roots of $f_{d,c}^n(0)$. Those roots are determine by the factorization of $c^{d-1} -1$ modulo $p$.

        By Corollary \ref{cor:disc_mult} $\prod_{k \mid n} D_d(0,k) = \Disc(f_{d,c}^n(0)$, so we have the statement of the proposition.
    \end{proof}
    \begin{rem}
        In particular, this implies that $p$ always occurs to the power at least $d-1$ in the discriminant, so it either ramifies to high degree, or has a high residue field degree.
    \end{rem}
    \begin{exmp}
        Consider $f_{5,c}(x) = x^5+c$. Then we have
        \begin{equation*}
            D_5(0,3) = 2^{48} \cdot 101^4 \cdot 431^4.
        \end{equation*}
        So we look at the factorization of $G_5(0,3)$ mod $p$ for $p=101, 431$. We have for $p=101$
        \begin{equation*}
            (c + 6)^2(c + 41)^2 (c + 60)^2 (c + 95)^2 \cdot g(c)
        \end{equation*}
        where $g(c)$ has only simple roots. For $p=431$ we have
        \begin{equation*}
            (c^2 + 54c + 165)^2 (c^2 + 377c+ 165)^2 \cdot h(c)
        \end{equation*}
        where $h(c)$ has only simple roots. This phenomenon is because if we replace $c$ by $c^{d-1}$ in $G_d(0,n)$, then that polynomial has a single double root. Then the final splitting depends on the roots of unit in $\F_p$. For $p=101$ we have four $4\tth$ roots of unity
        \begin{equation*}
            (c^4-1) \equiv (c + 1) (c + 10) (c + 91) (c + 100) \pmod{101}
        \end{equation*}
        and for $p=431$ has only two $4\tth$ roots of unity
        \begin{equation*}
            (c^4-1) \equiv (c + 1) (c + 430) (c^2 + 1) \pmod{431}.
        \end{equation*}
    \end{exmp}

    \begin{exmp} \label{exmp1}
        Consider $f_{7,c} = x^7 + c$. Then $G_7(0,3)$ is reducible over $\Q$, so we expect exponents larger than $d-1$.
        In particular, we have
        \begin{equation*}
            G_7(0,3) = (c^6 - c^3 + 1)(c^6 + c^3 + 1)(c^{36} + 6c^{30} + 14c^{24} + 15c^{18} + 6c^{12} + 1)
        \end{equation*}
        and
        \begin{equation*}
            D_7(0,3) = 2^{48} \cdot 3^{54} \cdot 19^{12} \cdot 14731^6.
        \end{equation*}
        Using Corollary \ref{cor:disc_mult} and equation (\ref{eq6}) we can explain the powers that occur in $D_7(0,3)$.
        The discriminants of the three irreducible factors of $G_7(0,3)$ respectively are
        \begin{equation*}
            -3^9, \qquad -3^9, \qquad 2^{36} \cdot 3^{36} \cdot 14731^6.
        \end{equation*}
        The three pairs of resultants are
        \begin{equation*}
            2^6, \qquad 19^3, \qquad 19^3.
        \end{equation*}
        The fact that all exponents in $D_7(0,3)$ are multiples of $6$ is from Proposition \ref{prop3} and the high powers of $2,3$ are from Corollary \ref{cor3} with the extra powers of 3 occurring from the inequality of Proposition \ref{prop3} since in that case $\gcd(e_i,p) \neq 1$.

    \end{exmp}
    This example emphasizes yet again, that the exponent $2$ for $p=13$ for $d=2, n=6$ is special.

\subsection{Reformulation as a dynamical system I} \label{sect:reform1}
    We consider the following family of $2$ dimensional dynamical systems.
    \begin{defn}
        \begin{align*}
            F_{d,c}:\A^2 &\to \A^2\\
            (x,y) &\mapsto [x^d + c, dx^{d-1}y +1]
        \end{align*}
    \end{defn}
    We show that $p$ dividing $D_d(0,n)$ is equivalent to $(0,0)$ having minimal period $n$. This allows for several immediate consequences and justifies the computational data. First we clarify the implications of $p$ dividing the discriminant $D_d(0,n)$.
    \begin{lem} \label{lem4}
        $p \mid D_d(0,n)$ if and only if there exists a $c \in \overline{\F_p}$ such that $0$ is periodic of minimal period $n$ for $f_{d,c}$ and $(f_{d,c}^n(0))'=0$ in $\overline{\F_p}$.
    \end{lem}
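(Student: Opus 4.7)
The plan is to use the factorization
\begin{equation*}
f_{d,c}^n(0) = \prod_{k \mid n} G_d(0,k)(c),
\end{equation*}
obtained by evaluating the dynatomic product $f^n(z) - z = \prod_{k \mid n} \Phi^*_{f,k}(z)$ at $z = 0$, and then to differentiate in $c$ via the Leibniz rule. Both directions reduce to the equivalence between $c_0$ being a multiple root of $G_d(0,n) \bmod p$ and $c_0$ being a parameter with $0$ of minimal period $n$ for $f_{d,c_0}$ at which $(f_{d,c}^n(0))'(c_0) = 0$.

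For $(\Leftarrow)$, I start with $c \in \overline{\F_p}$ as in the hypothesis and use that minimality of period $n$ forces $f_{d,c}^j(0) \neq 0$ for $1 \leq j < n$. Applying the factorization at each $j \mid n$ with $j < n$ then shows $G_d(0,k)(c) \neq 0$ for every proper divisor $k$ of $n$, while $G_d(0,n)(c) = 0$. Differentiating the product and evaluating at $c$ kills every term except the one in which the derivative hits $G_d(0,n)$, so
\begin{equation*}
(f_{d,c}^n(0))'(c) = G_d(0,n)'(c) \prod_{k \mid n,\, k < n} G_d(0,k)(c).
\end{equation*}
Since the product is nonzero and the left side vanishes by hypothesis, $G_d(0,n)'(c) = 0$, making $c$ a multiple root of $G_d(0,n) \bmod p$, and hence $p \mid D_d(0,n)$.

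For $(\Rightarrow)$, starting from $p \mid D_d(0,n)$ I pick a multiple root $c_0 \in \overline{\F_p}$ of $G_d(0,n) \bmod p$. The first step is to verify that $0$ has minimal period exactly $n$ under $f_{d,c_0}$: lift $c_0$ via a prime above $p$ to a root $\tilde c_0 \in \overline{\Z}$ of $G_d(0,n)$, apply Theorem \ref{thm1} in characteristic zero to conclude $0$ has exact period $n$ for $f_{d,\tilde c_0}$, and observe that a drop of minimal period to some $k \mid n$ with $k < n$ upon reduction would force $c_0$ also to be a root of $G_d(0,k) \bmod p$, a cross-factor collision recorded by a resultant rather than by the discriminant $D_d(0,n)$. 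Once minimal period $n$ is in hand, the product-rule computation from $(\Leftarrow)$ runs in reverse and gives $(f_{d,c}^n(0))'(c_0) = G_d(0,n)'(c_0) \prod_{k \mid n,\, k<n} G_d(0,k)(c_0) = 0$, since $G_d(0,n)'(c_0) = 0$ by multiplicity.

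The hard part is the subclaim in the forward direction: that a multiple root of $G_d(0,n) \bmod p$ genuinely corresponds to a parameter where $0$ has minimal period exactly $n$ in characteristic $p$, rather than one where the minimal period has dropped. Separating these two phenomena requires carefully distinguishing multiplicities intrinsic to $G_d(0,n)$ from coincidences between different factors $G_d(0,k)$, and the natural framework is the discriminant-of-a-product formula $\Disc(fg) = \Disc(f)\Disc(g)\Res(f,g)^2$ applied iteratively to $f_{d,c}^n(0) = \prod_{k \mid n} G_d(0,k)$, which isolates period-collision contributions into the resultant terms and keeps genuine transversality failures inside $D_d(0,n)$ itself.
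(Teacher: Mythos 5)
Your overall strategy --- differentiating $f_{d,c}^n(0)=\prod_{k\mid n}G_d(0,k)$ via the Leibniz rule --- is the same one the paper uses, and your $(\Leftarrow)$ direction is complete and correct: minimality gives $G_d(0,k)(c)\neq 0$ for every proper divisor $k$ of $n$, the product rule collapses to the single term $G_d(0,n)'(c)\prod_{k\mid n,\,k<n}G_d(0,k)(c)$, and since $G_d(0,n)$ is monic a common root of $G_d(0,n)$ and $G_d(0,n)'$ over $\overline{\F_p}$ forces $p\mid D_d(0,n)$. This is in fact spelled out more carefully than in the paper, whose proof only really treats the direction starting from $p\mid D_d(0,n)$.

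The gap is in the step you yourself single out as the hard part of $(\Rightarrow)$, and the fix you propose does not close it. You must show that a multiple root $c_0$ of $G_d(0,n)\bmod p$ has $0$ of minimal period \emph{exactly} $n$, i.e., that $c_0$ is not also a root of $G_d(0,k)\bmod p$ for some proper divisor $k$. The identity $\Disc(fg)=\Disc(f)\Disc(g)\Res(f,g)^2$ does not ``isolate'' this possibility away from $D_d(0,n)$: a single $c_0$ can simultaneously be a repeated root of $G_d(0,n)\bmod p$ (contributing to the discriminant) and a root of $G_d(0,k)\bmod p$ (contributing to the resultant); the two phenomena are not mutually exclusive, so recording the collision in a resultant factor does not remove it from $\Disc(G_d(0,n))$. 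What you actually need is $p\nmid\Res(G_d(0,n),G_d(0,k))$ for $k\mid n$, $k<n$ --- precisely Corollary \ref{cor:disc_mult} --- but in the paper that corollary is deduced from Theorem \ref{thm_reform}, which itself rests on Lemma \ref{lem4}, so invoking it here is circular. Lifting to characteristic zero and applying Theorem \ref{thm1} controls only the lift $\tilde c_0$, not its reduction: the minimal period can a priori drop modulo $p$, just as a primitive $n$th root of unity can reduce to one of smaller order. To be fair, the paper's own proof silently assumes the same statement (``the roots of $G_d(0,n)$ are exactly the $c$ values for which $0$ has minimal period $n$''), so you have correctly located the one point that needs a genuinely new input; you just have not supplied it.
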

    \begin{proof}
        $D_d(0,n)$ is the discriminant of $G_d(0,n)$. In particular, if $D_d(0,n) = 0$, there is some $c$ value which is a root of both $G_d(0,n)$ and its derivative. The roots of $G_d(0,n)$ are exactly the $c$ values for which $0$ has minimal period $n$ for $f_{d,c}$. Furthermore
        \begin{equation*}
            f_{d,c}^n(0) = \prod_{d \mid n} G_d(0,n).
        \end{equation*}
        Since $G_d(0,n)$ and $G_d'(0,n)$ are both $0$ at $c$, then the weaker statement $(f_{d,c}^n(0))'=0$ is also true.
    \end{proof}

    \begin{thm} \label{thm_reform}
        $p \mid D_d(0,n)$ if and only if $(0,0)$ is periodic of minimal period $n$ for $F_{d,c}(x,y)$ for some $c \in \overline{\F_{p}}$.
    \end{thm}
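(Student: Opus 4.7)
The plan is to identify the orbit of $(0,0)$ under $F_{d,c}$ with the pair $(f_{d,c}^k(0),\,\partial f_{d,c}^k(0)/\partial c)$, and then to invoke Lemma \ref{lem4}. Writing $(x_k, y_k) := F_{d,c}^k(0,0)$, I would first prove by induction on $k$ that $x_k = f_{d,c}^k(0)$ and $y_k = \partial f_{d,c}^k(0)/\partial c$. The first coordinate is immediate because $x_{k+1} = x_k^d + c$ exactly matches the iteration of $f_{d,c}$ starting at $0$. The second coordinate recurrence $y_{k+1} = d x_k^{d-1} y_k + 1$ is precisely what one obtains by differentiating $f_{d,c}^{k+1}(0) = (f_{d,c}^k(0))^d + c$ with respect to $c$. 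The base case $(x_0,y_0) = (0,0)$ agrees with $f_{d,c}^0(0) = 0$ and its derivative in $c$.

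With this identification, the condition $F_{d,c}^n(0,0) = (0,0)$ is equivalent to the conjunction $f_{d,c}^n(0) = 0$ and $(f_{d,c}^n(0))' = 0$ in $\overline{\F_p}$, which is exactly what Lemma \ref{lem4} translates into $p \mid D_d(0,n)$, provided one also matches the notion of \emph{minimal} period. The forward direction is straightforward: if $0$ has minimal period $n$ for $f_{d,c}$, then $x_k \ne 0$ for every $0 < k < n$, so certainly $(x_k, y_k) \ne (0, 0)$, and the minimal period of $(0,0)$ under $F_{d,c}$ is exactly $n$.

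The reverse direction is the main obstacle. Suppose $(0,0)$ has minimal period $n$ for $F_{d,c}$; I must rule out the possibility that $0$ has strictly smaller minimal period $k$ under $f_{d,c}$ with $k \mid n$. The key observation is that once $x_k = 0$, the recurrence forces $y_{k+1} = 1 = y_1$, and then by induction on $j \ge 1$ we have $y_{k+j} = y_j$. Applying this shift repeatedly for $n = qk$ gives $y_n = y_k$. Since $(0,0)$ has period $n$, we have $y_n = 0$, hence $y_k = 0$ and therefore $F_{d,c}^k(0,0) = (0,0)$, contradicting the minimality of $n$. So $k = n$, completing the matching of minimal periods.

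Combining both directions with Lemma \ref{lem4} yields the claimed equivalence: $(0,0)$ has minimal period $n$ for $F_{d,c}$ over $\overline{\F_p}$ for some $c$ if and only if some $c \in \overline{\F_p}$ gives $0$ minimal period $n$ for $f_{d,c}$ together with $(f_{d,c}^n(0))' = 0$, if and only if $p \mid D_d(0,n)$.
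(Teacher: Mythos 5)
Your proposal is correct and follows essentially the same route as the paper: identify $F_{d,c}^k(0,0)$ with $\bigl(f_{d,c}^k(0),\,(f_{d,c}^k(0))'\bigr)$, reduce to Lemma \ref{lem4}, and match minimal periods by noting that once the first coordinate vanishes the second coordinate resets to $1$, so the $y$-orbit shifts by $k$ and a shorter period $k$ would force $y_k=0$, a contradiction. The paper phrases this last step as the orbit $(0,0)\mapsto(c,1)\mapsto(0,\alpha)\mapsto(c,1)\mapsto\cdots$ forcing $\alpha=0$, which is the same observation as your induction $y_{k+j}=y_j$.
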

    \begin{proof}
        Assume that $(0,0)$ is periodic of minimal period $n$ for some $c$ in $\overline{\F_p}$. Then there is some $k$ so that $c \in \F_{p^k}$ and that $f_{d,c}^n(0) \equiv 0$ so that $0$ is periodic of period dividing $n$ for $f_{d,c}$. Assume that the period of $0$ is strictly smaller than $n$, then we have the orbit
        \begin{equation*}
            (0,0) \mapsto (c,1) \mapsto (0,\alpha) \mapsto (c,1) \mapsto \cdots
        \end{equation*}
        so that $\alpha = 0$ and $0$ must be minimal period $n$. So the question is then of the derivatives. We have
        \begin{equation*}
            f_{d,n}(0) = f_{d,n-1}^d(0) +c
        \end{equation*}
        so that
        \begin{equation*}
            \frac{d f_{d,c}^n(0)}{dc} = d(f_{d,c}^{n-1}(0))^{d-1}(f_{d,c}^{n-1}(0))' + 1.
        \end{equation*}
        Labelling $f_{d,n-1} = x$ and its derivative $y$, we see that
        \begin{equation*}
            F_{d,c}^n (0,0) = (f_{d,c}^n(0), (f_{d,c}^n(0))').
        \end{equation*}
        Thus, if the minimal period of $(0,0)$ is $n$ in $\F_{p^k}$, then by Lemma \ref{lem4} $n$ is the smallest integer such that both $f_{d,c}^n(0)$ and $(f_{d,c}^n(0))'$ are $0$ mod $p$ and $p \mid D_d(0,n)$

        If $p \mid D_d(0,n)$, then by Lemma \ref{lem4} there is some $k$ and some $c \in \F_{p^k}$ such that $f_{d,c}^n(0) \equiv (f_{d,c}^n)'(0) \equiv 0$ and in particular, $0$ is period $n$ for $f_{d,c}$. Thus $(0,0)$ is at least minimal period $n$ for $F_{d,c}(x,y)$. We again consider the orbit of $(0,0)$
        \begin{equation*}
            (0,0) \mapsto (c,1) \mapsto (0,\alpha) \mapsto (c,1) \mapsto \cdots
        \end{equation*}
        for some $\alpha$. Since $p \mid D_n$ and
        \begin{equation*}
            F_{d,c}^n (0,0) = (f_{d,c}^n(0), (f_{d,c}^n(0))'),
        \end{equation*}
        we must have $\alpha =0$, so that $(0,0)$ is periodic with minimal period at most $n$. Thus, $(0,0)$ is periodic with minimal period $n$.
    \end{proof}

    Similar to Gleason's original proof of transversality we can show that primes dividing $d$, never divide the discriminant,
    \begin{cor}
        For all $p \mid d$, $p \nmid D_d(0,n)$ for all $n$.
    \end{cor}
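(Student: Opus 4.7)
The plan is to apply Theorem \ref{thm_reform}, which reformulates the divisibility $p \mid D_d(0,n)$ as the existence of some $c \in \overline{\F_p}$ for which $(0,0)$ is periodic of minimal period $n$ under the map
\begin{equation*}
    F_{d,c}(x,y) = (x^d + c,\, dx^{d-1}y + 1).
\end{equation*}
So it suffices to show that when $p \mid d$, the point $(0,0)$ is \emph{never} periodic under $F_{d,c}$, regardless of the choice of $c \in \overline{\F_p}$.

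The key observation concerns the second coordinate of $F_{d,c}$. When $p \mid d$, the coefficient $d$ vanishes in $\overline{\F_p}$, so the second coordinate $dx^{d-1}y + 1$ collapses to the constant $1$ for every $(x,y)$. Iterating, for any starting point $(x_0, y_0)$ and any $n \ge 1$, the second coordinate of $F_{d,c}^n(x_0,y_0)$ is identically $1$ in $\overline{\F_p}$. Applying this to $(x_0,y_0)=(0,0)$, the orbit takes the shape
\begin{equation*}
    (0,0) \mapsto (c,1) \mapsto (c^d+c,\,1) \mapsto \cdots
\end{equation*}
whose second coordinate never returns to $0$. Hence $(0,0)$ cannot be periodic for $F_{d,c}$ over $\overline{\F_p}$.

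By the contrapositive of Theorem \ref{thm_reform}, this forces $p \nmid D_d(0,n)$ for every $n$, completing the proof. The main obstacle is essentially nonexistent once Theorem \ref{thm_reform} is in hand; this is the dynamical-systems analogue of Gleason's original observation that the derivative $(f_{d,c}^n(0))'$ is congruent to $1$ modulo any prime dividing $d$, and the computation amounts to noticing that the ``derivative coordinate'' of $F_{d,c}$ is engineered so that the constant term $+1$ dominates whenever $p \mid d$.
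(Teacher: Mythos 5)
Your proof is correct and follows essentially the same route as the paper: both invoke Theorem \ref{thm_reform} and observe that when $p \mid d$ the second coordinate of $F_{d,c}$ degenerates to the constant $1$ over $\overline{\F_p}$, so $(0,0)$ can never be periodic. No issues.
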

    \begin{proof}
        If there is $c$ value for which $(0,0)$ is periodic of minimal period $n$ over $\overline{\F_p}$, then it must also be periodic modulo $p$. But
        \begin{equation*}
            F_{d,c}(x,y) \equiv (x^d +c, 1) \pmod{p}
        \end{equation*}
        so that $y \neq 0$ for all $n$. Thus, $(0,0)$ is never periodic.
    \end{proof}

    \begin{cor} \label{cor3}
        If $p\mid d-1$, then $p^k \mid D_d(0,n)$ for all $n \geq 2$ where
        \begin{equation*}
            k \geq \sum_{t \mid n} \mu(n/t)d^{t-1}.
        \end{equation*}
    \end{cor}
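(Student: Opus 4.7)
The plan is to show that when $p\mid d-1$, the polynomial $G_d(0,n)$ reduces modulo $p$ to a perfect $p$-th power in $\F_p[c]$, which immediately forces $v_p(D_d(0,n))\ge M_n:=\sum_{t\mid n}\mu(n/t)d^{t-1}$.

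Write $d-1=ep$. The first step is an induction on $n$ showing that in $\F_p[c]$
\begin{equation*}
f_{d,c}^{n}(0)\equiv c\cdot H_n(c)^{p}\pmod p,
\end{equation*}
where $H_1=1$ and $H_{n+1}=c^{e}H_n^{d}+1$. Starting from $f_{d,c}^{n+1}(0)=(f_{d,c}^{n}(0))^{d}+c\equiv c\bigl(c^{d-1}H_n^{pd}+1\bigr)\pmod p$, the identities $c^{d-1}=(c^{e})^{p}$ and $H_n^{pd}=(H_n^{d})^{p}$ together with the freshman's dream let me rewrite $c^{d-1}H_n^{pd}+1=(c^{e}H_n^{d})^{p}+1^{p}=(c^{e}H_n^{d}+1)^{p}$, completing the step.

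Substituting this congruence into $G_d(0,n)=\prod_{k\mid n}f_{d,c}^{k}(0)^{\mu(n/k)}$ yields
\begin{equation*}
G_d(0,n)\equiv c^{\sum_{k\mid n}\mu(n/k)}\Bigl(\prod_{k\mid n}H_k^{\mu(n/k)}\Bigr)^{p}\pmod p.
\end{equation*}
For $n\ge 2$ the Möbius sum in the exponent of $c$ vanishes, so $G_d(0,n)$ becomes a $p$-th power in $\F_p(c)$. Because $G_d(0,n)\in\F_p[c]$ and $\F_p[c]$ is a UFD, we in fact obtain $G_d(0,n)\equiv h_n^{p}\pmod p$ for a genuine polynomial $h_n\in\F_p[c]$.

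Finally, lift back to $\Z_p[c]$ and write $G_d(0,n)=h_n^{p}+p\cdot r(c)$; differentiating gives $G_d(0,n)'(c)=p\bigl(h_n^{p-1}h_n'+r'(c)\bigr)$, so every coefficient of the derivative is divisible by $p$. Since $G_d(0,n)$ is monic of degree $M_n$, its $M_n$ roots $c_i$ lie in $\overline{\Z_p}$ and satisfy $v_p(G_d(0,n)'(c_i))\ge 1$. Therefore
\begin{equation*}
v_p(D_d(0,n))=\sum_{i=1}^{M_n}v_p\bigl(G_d(0,n)'(c_i)\bigr)\ge M_n,
\end{equation*}
which is exactly $p^{M_n}\mid D_d(0,n)$. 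The only subtle point is the UFD argument verifying that the $p$-th-root $h_n$ is polynomial rather than merely rational; the induction in step one is the essential input, and everything else is formal.
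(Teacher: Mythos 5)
Your proof is correct, and it shares its starting point with the paper --- the observation that because every exponent of $c$ in $f_{d,c}^n(0)$ is $\equiv 1 \pmod{d-1}$, the polynomial $f_{d,c}^n(0)/c$ becomes a perfect power modulo a prime $p \mid d-1$ --- but it finishes by a genuinely different and more elementary mechanism. The paper asserts $f_{d,c}^n(0)/c \equiv h(c)^{d-1} \pmod p$ and then invokes the algebraic number theory of Proposition \ref{prop2}: Dedekind factorization, wild ramification ($p \mid e_i$) forcing strict inequality in $v_p(D) \geq \sum_i (e_i-1)f_i$, hence $v_p(D) \geq \sum_i e_i f_i = \deg G_d(0,n)$. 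That route carries the implicit hypotheses of Proposition \ref{prop2} (irreducibility of the factors, $p \nmid i(c_0)$), and its claim of a $(d-1)$-st power is actually stronger than what the Frobenius argument delivers when $d-1 = ep$ with $e > 1$ (e.g.\ $d=7$, $p=3$: $X^6+1 \equiv (X^2+1)^3$ is a cube but not a sixth power mod $3$). Your induction $f_{d,c}^n(0) \equiv c\,H_n(c)^p$ proves exactly the $p$-th-power statement that is both true and sufficient, and your conclusion --- $G_d(0,n) \equiv h_n^p \pmod p$ forces $G_d(0,n)' \equiv 0 \pmod p$, so each factor in $D_d(0,n) = \pm\prod_i G_d(0,n)'(c_i)$ has $v_p \geq 1$, giving $v_p(D_d(0,n)) \geq \deg G_d(0,n) = \sum_{t \mid n}\mu(n/t)d^{t-1}$ --- needs only that the roots are algebraic integers and that the discriminant of a monic polynomial is the product of derivative values at the roots. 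What your approach buys is the removal of the monogenicity/index and irreducibility caveats and a cleaner bookkeeping of the power of $p$; what the paper's approach buys is a tighter link to the ramification data ($e_i$, $f_i$) that the surrounding section uses to interpret \emph{which} extensions of $\F_p$ carry the multiple roots.
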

    \begin{proof}
        The powers of $c$ in $f_{d,c}^n(0)$ are all congruent to $1 \pmod{d-1}$, so that after factoring out $c$, they are all divisible by $d-1$. Since $p$ divides $d-1$ we see that
        \begin{equation*}
            \frac{f_{d,c}^n(0)}{c} \equiv h(c)^{d-1} \pmod{p}
        \end{equation*}
        for some polynomial $h$. Thus, every root is a multiple root and, since $\gcd(d-1,p) > 1$ we have a strict inequality in Proposition \ref{prop2} \cite[III \S 6]{Serre2}, so that
        \begin{equation*}
            k \geq \sum_i e_if_i = \deg(G_n).
        \end{equation*}
        Thus, the power of $p$ dividing $D_d(0,n)$ is at least the degree of $G_d(0,n)$ which by the $m=0$ case of Corollary \ref{cor1} is
        \begin{equation*}
            \sum_{k \mid n} \mu(n/k)d^{k-1}.
        \end{equation*}
    \end{proof}

    \begin{cor}\label{cor:disc_mult}
        $\Res(G_d(0,n), G_d(0,m))=\pm 1$ for all $n \neq m	$. Moreover,
        $\Disc(f_{d,c}^n(0)) = \prod_{k \mid n} D_d(0,k)$.
    \end{cor}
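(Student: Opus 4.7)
The plan is to first reduce the discriminant formula to the assertion that each pairwise resultant $\Res(G_d(0,k), G_d(0,l))$ with $k \neq l$ is a unit, and then prove the resultant identity by ruling out any prime divisor via the dynamical reformulation of Theorem~\ref{thm_reform}. The reduction is standard: iterating $\Disc(fg) = \Disc(f)\Disc(g)\Res(f,g)^2$ on the factorization $f_{d,c}^n(0) = \prod_{k \mid n} G_d(0,k)$ (used in the proof of Theorem~\ref{thm1}) yields
\begin{equation*}
  \Disc(f_{d,c}^n(0)) = \prod_{k \mid n} D_d(0,k) \cdot \prod_{\substack{k<l\\ k,l \mid n}} \Res(G_d(0,k),G_d(0,l))^2,
\end{equation*}
so the ``moreover'' statement drops out once the resultant identity is in hand.

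For the resultant identity, fix $k \neq l$ (WLOG $l<k$) and set $g = \gcd(k,l) < k$. Over characteristic zero the roots of $G_d(0,k)$ and $G_d(0,l)$ are disjoint (distinct exact periods), so the resultant is a nonzero integer; I show no prime $p$ divides it. Otherwise pick a common root $c_0 \in \overline{\F_p}$. Since $G_d(0,k) \mid f_{d,c}^k(0)$ and $G_d(0,l) \mid f_{d,c}^l(0)$, both $f_{d,c_0}^k(0)$ and $f_{d,c_0}^l(0)$ vanish mod $p$, hence so does $f_{d,c_0}^g(0)$. Using $f_{d,c}^g(0) = \prod_{j \mid g} G_d(0,j)$, the value $c_0$ must be a root of some $G_d(0,k_0)$ with $k_0 \mid g$, in particular $k_0 < k$. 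Thus $c_0$ is a root of two distinct factors $G_d(0,k_0), G_d(0,k)$ of $f_{d,c}^k(0) = \prod_{j \mid k} G_d(0,j)$, forcing it to have multiplicity $\geq 2$ there, i.e. $(f_{d,c_0}^k(0))' \equiv 0 \pmod p$.

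The dynamical identity $F_{d,c}^k(0,0) = (f_{d,c}^k(0),(f_{d,c}^k(0))')$ from the proof of Theorem~\ref{thm_reform} now gives $F_{d,c_0}^k(0,0)=(0,0)$, and the same argument at $l$ gives $F_{d,c_0}^l(0,0)=(0,0)$. So $(0,0)$ is periodic under $F_{d,c_0}$ of some minimal period $k_1 \mid g < k$. The trapping property $F_{d,c_0}(0,y)=(c_0,1)$ (independent of $y$) then confines the orbit of $(0,0)$ into a $k_1$-cycle, and I expect this to imply that the vanishing multiplicity $v := v_{c_0}(h_{k_1})$ of $h_j := f_{d,c}^j(0)$ is constant along the cycle: $v_{c_0}(h_{tk_1}) = v$ for every $t \geq 1$. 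Granted this, M\"obius inversion on $G_d(0,k) = \prod_{j \mid k} h_j^{\mu(k/j)}$ produces
\begin{equation*}
  v_{c_0}(G_d(0,k)) = \sum_{j \mid k} \mu(k/j) v_{c_0}(h_j) = v \sum_{j' \mid k/k_1} \mu((k/k_1)/j') = 0,
\end{equation*}
the last equality holding because $k/k_1 > 1$, which contradicts $G_d(0,k)(c_0) = 0$. The main obstacle is exactly this ``constant multiplicity along the cycle'' assertion, which needs to be teased out of the trapping identity $F_{d,c_0}(0,y) = (c_0,1)$ by a careful Taylor-expansion argument for the composition $h_{tk_1}(c) = f_{d,c}^{k_1}(h_{(t-1)k_1}(c))$ around $c = c_0$.
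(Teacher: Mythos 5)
Your route is genuinely different from the paper's. The paper disposes of the resultant claim in two lines by appealing to Theorem \ref{thm_reform}: a common root of $G_d(0,k)$ and $G_d(0,l)$ in $\overline{\F_p}$ would force $(0,0)$ to be periodic of minimal period both $k$ and $l$ for $F_{d,c}$, which is absurd; the ``moreover'' part then follows from $f_{d,c}^n(0)=\prod_{k\mid n}G_d(0,k)$ and equation (\ref{eq6}), exactly as in your reduction. Your argument instead works directly with the orders of vanishing $v_{c_0}(h_j)$ of $h_j=f_{d,c}^j(0)$ at the putative common root and reaches the contradiction $v_{c_0}(G_d(0,k))=0$ by M\"obius summation. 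This is more self-contained and makes explicit the double-root mechanism that the paper's citation of Theorem \ref{thm_reform} leaves implicit, but it hinges on the constant-multiplicity claim that you flag and do not prove.

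That claim is the one genuine gap, and the missing ingredient is precisely the criticality of $0$: without it the multiplier of the cycle could cause cancellation of leading terms and the valuation need not be constant. With it, your Taylor-expansion plan closes immediately. Since $f_{d,c}(z)-f_{d,c}(0)=z^d$, an easy induction shows $f_{d,c}^{k_1}(z)-f_{d,c}^{k_1}(0)$ lies in $z^d\,\Z[c,z]$, so $f_{d,c}^{k_1}(z)=h_{k_1}(c)+z^d u(c,z)$ and hence
\begin{equation*}
  h_{tk_1}=h_{k_1}+h_{(t-1)k_1}^{\,d}\,u\bigl(c,h_{(t-1)k_1}\bigr);
\end{equation*}
if $v_{c_0}(h_{(t-1)k_1})=v\ge 1$ the second summand vanishes to order at least $dv>v$, so $v_{c_0}(h_{tk_1})=v$ by induction. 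A smaller repair: ``the same argument at $l$'' does not yield $(f_{d,c}^l(0))'(c_0)\equiv 0$ when $l\mid k$, since then $k_0$ may equal $l$ and you only exhibit one vanishing factor of $f_{d,c}^l(0)$. You do not need that step: once $F_{d,c_0}^k(0,0)=(0,0)$, the trapping identity $F_{d,c_0}(0,y)=(c_0,1)$ forces the minimal period $k_1$ of $(0,0)$ to equal the minimal period $n_0$ of $0$ itself (this is the first paragraph of the proof of Theorem \ref{thm_reform}), and $n_0\mid\gcd(k,l)=g<k$ because $c_0$ kills both $f_{d,c}^k(0)$ and $f_{d,c}^l(0)$. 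With these two points supplied, your proof is complete and correct.
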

    \begin{proof}
        The statement
        \begin{equation*}
            \Res(G_d(0,n),G_d(0,m)) = \pm 1
        \end{equation*}
        for $n \neq m$ is the same as saying there is no prime such that $G_d(0,n)$ and $G_d(0,m)$ have a common root. By Theorem \ref{thm_reform} this would mean that there is some $c$ value over $\overline{\F_p}$ for which $(0,0)$ is minimal period $n$ and $m$, which cannot happen if $n \neq m$.

        For the second part, we know that
        \begin{equation*}
            f_{d,c}^n(0) = \prod_{k \mid n} G_d(0,k)
        \end{equation*}
        and that
        \begin{equation*}
            \Disc(fg) = \Disc(f)\Disc(g)\Res(f,g)^2.
        \end{equation*}
    \end{proof}

    \begin{def}\label{def:DS}
      A sequence, $\left\lbrace a_n \right\rbrace$, of integers is called a \emph{divisibility sequence} if whenever $m \vert n$ then $a_m \vert a_n$.
    \end{def}

    \begin{cor}\label{cor:DS}
     Let $A_n: = \Disc\left(f^n_c \left( 0  \right)\right)$. The sequence $\left\lbrace A_n \right\rbrace$ is a divisibility sequence.
    \end{cor}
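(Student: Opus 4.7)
The plan is to reduce the statement to the factorization identity established in Corollary \ref{cor:disc_mult}. That corollary expresses the discriminant of the iterate polynomial as a product indexed by divisors:
\begin{equation*}
    A_n = \Disc(f_{d,c}^n(0)) = \prod_{k \mid n} D_d(0,k).
\end{equation*}
So the divisibility sequence property reduces immediately to a combinatorial statement about divisors.

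More precisely, suppose $m \mid n$. Then every $k$ with $k \mid m$ also satisfies $k \mid n$, so the index set in the product for $A_m$ is a subset of the index set for $A_n$. Therefore I would write
\begin{equation*}
    A_n = \Biggl(\prod_{k \mid m} D_d(0,k)\Biggr) \cdot \Biggl(\prod_{\substack{k \mid n \\ k \nmid m}} D_d(0,k)\Biggr) = A_m \cdot \prod_{\substack{k \mid n \\ k \nmid m}} D_d(0,k).
\end{equation*}
Since each $D_d(0,k)$ is an integer (being the discriminant of an integer polynomial, by Corollary \ref{cor2} the roots of $G_d(0,k)$ are algebraic integers and $G_d(0,k) \in \Z[c]$ is monic), the second factor is an integer and hence $A_m \mid A_n$ in $\Z$.

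There is essentially no obstacle here: all the real work is already encoded in Corollary \ref{cor:disc_mult}, which in turn rests on the resultant identity $\Res(G_d(0,n),G_d(0,m)) = \pm 1$ derived from the dynamical reformulation of Theorem \ref{thm_reform}. The only thing to double-check is that the $D_d(0,k)$ are genuine integers (not just rationals), which follows from $G_d(0,k) \in \Z[c]$ being monic, so the entire proof is a one-line deduction from the previous corollary.
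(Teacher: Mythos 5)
Your proof is correct and follows essentially the same route as the paper: both arguments invoke Corollary \ref{cor:disc_mult} to write $A_n = \prod_{k \mid n} D_d(0,k)$ and then observe that for $m \mid n$ the divisors of $m$ form a subset of the divisors of $n$, so $A_m$ divides $A_n$. Your extra remark that each $D_d(0,k)$ is a genuine integer (since $G_d(0,k)$ is monic in $\Z[c]$) is a harmless bit of added care that the paper leaves implicit.
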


    \begin{proof}
     From Corollary \ref{cor:disc_mult} we know that $A_m = \prod_{k \vert m } D_d(0,k)$. So if $k\vert m $ then $D_d(0,k) \vert A_m$. Also, if $k \vert m $ then $k \vert n$, thus $A_m = \prod_{k \vert m } D_d(0,k) \vert A_n = \prod_{k \vert n } D_d(0,k)$.
    \end{proof}

    We finish this section with a remarks on Silverman's multiplicatively finite generation question.
    \begin{cor}\label{cor:Conditional_bound}
      Fix a prime $p$. If the powers of $p$ appearing in the sequence $D_d(0,n)$ are bounded, then the set
      \begin{equation*}
        S_p:= \left\{n: \Disc(f_c^n(0)): = A_n \equiv 0 \mod p\right\}
      \end{equation*}
      is multiplicatively finitely generated.
    \end{cor}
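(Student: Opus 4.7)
The plan is to reduce the statement to showing that $T_p := \{k \geq 1 : p \mid D_d(0,k)\}$ is finite under the boundedness hypothesis. By Corollary \ref{cor:disc_mult} we have $A_n = \prod_{k \mid n} D_d(0,k)$, so $n \in S_p$ iff some divisor of $n$ lies in $T_p$, and hence $S_p = \bigcup_{k \in T_p} k\N$. Once $T_p$ is finite, its divisibility-minimal elements form a finite set $\{n_1, \ldots, n_r\}$ with $S_p = \bigcup_{i=1}^r n_i\N$, giving the desired multiplicative finite generation.

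For the finiteness of $T_p$, I would use Theorem \ref{thm_reform}: if $n \in T_p$, there exists $c \in \overline{\F_p}$ of some degree $f := [\F_p(c):\F_p]$ such that $(0,0)$ has minimal period $n$ under $F_{d,c}$; in particular this $c$ is a multiple root modulo $p$ of $G_d(0,n)$. The core estimate to prove is
\[
v_p(D_d(0,n)) \;\geq\; f.
\]
Let $g \in \Z[c]$ be an irreducible factor of $G_d(0,n)$ whose reduction has $c$ as a multiple root. Over $\F_p$, write $\bar g = w^k r$ with $w$ the minimal polynomial of $c$ over $\F_p$ (degree $f$), $k \geq 2$, and $\gcd(w,r)=1$; then $w^{k-1}$ divides $\gcd(\bar g,\bar g')$, so $\deg \gcd(\bar g,\bar g') \geq f$. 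The inequality $v_p(\Res(F,G)) \geq \deg \gcd(\bar F,\bar G)$, which follows from the Smith normal form of the Sylvester matrix modulo $p$ (its mod-$p$ rank equals $\deg F + \deg G - \deg\gcd$), gives $v_p(\Disc g) = v_p(\Res(g,g')) \geq f$. If instead two distinct irreducible factors $g_i,g_j$ of $G_d(0,n)$ both reduce to contain $c$, the same argument applied to $\Res(g_i,g_j)$ yields $v_p(\Res(g_i,g_j)) \geq f$. Equation \eqref{eq6} combined with nonnegativity of valuations then yields $v_p(D_d(0,n)) \geq f$.

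By hypothesis $v_p(D_d(0,n)) \leq M$ for some uniform $M$, so $f \leq M$, and every $c$ witnessing some $n \in T_p$ lies in the fixed finite field $\F_{p^M}$. For each of the finitely many $c \in \F_{p^M}$ the minimal period of $(0,0)$ under $F_{d,c}$ is a single well-defined integer bounded above by $|\A^2(\F_{p^M})| = p^{2M}$. Hence $T_p$ is contained in a finite set and therefore finite, completing the reduction.

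The main obstacle is establishing the discriminant lower bound in enough generality to apply to every $n$ simultaneously. Proposition \ref{prop2} supplies the bound only under the side assumption $p \nmid i(c_0)$, which cannot be enforced uniformly across all $n$ and all irreducible factors of $G_d(0,n)$. I would bypass this entirely via the gcd/Sylvester argument above, which is insensitive to the index $i(c_0)$ and applies to any monic $g \in \Z[c]$.
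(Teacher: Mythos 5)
Your proof is correct and follows the same overall strategy as the paper's: bound the field of definition $\F_{p^f}$ of any multiple root of $G_d(0,n)$ modulo $p$ by the power of $p$ in $D_d(0,n)$, conclude from the boundedness hypothesis that all such roots lie in a single finite field $\F_{p^M}$, invoke Theorem \ref{thm_reform} to bound the possible periods $n$ by $p^{2M}$, and finally use $A_n=\prod_{k\mid n}D_d(0,k)$ from Corollary \ref{cor:disc_mult} to convert finiteness of $\{n: p\mid D_d(0,n)\}$ into multiplicative finite generation of $S_p$. Where you genuinely diverge is in the linchpin inequality $v_p(D_d(0,n))\geq f$. The paper simply asserts that a bound on the $p$-power forces the multiple roots into a fixed $\F_{p^k}$; the only supporting result in the text, Proposition \ref{prop2}, carries the hypothesis $p\nmid i(c_0)$, which is not verified (and cannot obviously be enforced uniformly over all $n$ and all irreducible factors). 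Your Sylvester-matrix argument --- $v_p(\Res(F,G))\geq \operatorname{corank}$ of the Sylvester matrix mod $p$, which is at least $\deg\gcd(\bar F,\bar G)\geq f$, applied to $\Res(g,g')$ and to resultants of distinct factors via equation \eqref{eq6} --- is unconditional and elementary, so it closes this gap; it also quietly handles the degenerate cases ($\deg\bar g'<\deg g-1$, or $\bar g'=0$) since only the inequality $\operatorname{rank}\leq \deg F+\deg G-\deg\gcd$ is needed, not the equality you state. The trade-off is that the paper's route, when its hypothesis does hold, gives finer information (the exact relation to ramification data $\sum_i(e_i-1)f_i$), while yours gives only the lower bound --- but the lower bound is all the corollary requires, and your version makes the corollary genuinely unconditional.
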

    \begin{proof}
        If the powers of $p$ that divide $D_d(0,n)$ are bounded, then there is some $k$ such that every multiple root $c$ is in $\F_{p^k}$. Since this is a finite set, the possible (minimal) period of $(0,0)$ is bounded, so that $p$ divides only finitely many $D_d(0,n)$, call then $D_d(0,n_1),\ldots, D_d(0,n_r)$. Then $S_p$ is generated by $\{n_1,\ldots, n_r\}$.
    \end{proof}

\subsection{Reformulation as a dynamical system II - Manin-Mumford}
    The reformulation of Section \ref{sect:reform1} restated the problem in terms of an infinite family of dynamical systems. In this section, we instead reformulate the problem in terms of a single $3$ dimensional dynamical system. However, instead of having to consider only the orbit of the point $(0,0)$, we now have to consider the orbits of the points $(0,c,0)$ for any $c$ value. So we trade an infinite family of functions on a single point, to a single function on a subvariety of points. Using this system we could prove many of the same corollaries as in Section \ref{sect:reform1} in much the same way. Instead, we content ourselves with showing the connection to the dynamical Manin-Mumford conjecture.

    The Manin-Mumford conjecture, proved by Raynaud \cite{Raynaud2, Raynaud}, states that a subvariety of an abelian variety contains a dense set of torsion points if and only if it is a torsion translate. Zhang  stated a dynamical version for polarized dynamical systems: A subvariety is preperiodic if and only if it contains a Zariski dense set of preperiodic points. Unfortunately, the conjecture is not true in this form \cite{ghioca3}. The conjecture is reformulated for endomorphisms and there are a few special cases known \cite{ghioca3}, but the problem over $\overline{\F_p}$ and for rational maps remains open.

    \begin{thm} \label{thm_reform2}
        A prime $p$ divides $D_d(0,n)$ if and only if the map
        \begin{equation*}
            R_d(x,y,z) = [y,y^d+y-x^d,dx^{d-1}z+1]
        \end{equation*}
        has a periodic point in $\overline{\F_p}$ of the form $(0,c,0)$ with minimal period $n$.
    \end{thm}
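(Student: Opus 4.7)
The plan is to adapt the strategy of Theorem \ref{thm_reform} to the three-dimensional map $R_d$ and then invoke Lemma \ref{lem4}. The key computation is an explicit closed form for the iterates of $R_d$ applied to the starting point $(0,c,0)$, after which the minimality of the period is handled by an argument parallel to the two-dimensional case.

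First, I would show by induction on $n$ that
\begin{equation*}
    R_d^n(0, c, 0) = \bigl(f_{d,c}^n(0),\, f_{d,c}^{n+1}(0),\, (f_{d,c}^n(0))'\bigr),
\end{equation*}
where the prime denotes differentiation with respect to $c$. The base case $n=0$ is immediate. For the inductive step, the first coordinate of $R_d$ simply shifts $f_{d,c}^{n+1}(0)$ into the first slot; the middle coordinate $y^d + y - x^d$ becomes $(f_{d,c}^{n+1}(0))^d + f_{d,c}^{n+1}(0) - (f_{d,c}^n(0))^d$, which collapses to $f_{d,c}^{n+2}(0)$ via the identity $(f_{d,c}^n(0))^d = f_{d,c}^{n+1}(0) - c$; and the third coordinate is precisely the chain-rule formula for $\tfrac{d}{dc} f_{d,c}^{n+1}(0)$. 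Consequently, $R_d^n(0, c, 0) = (0, c, 0)$ is equivalent to the two conditions $f_{d,c}^n(0) = 0$ and $(f_{d,c}^n(0))' = 0$, since $f_{d,c}^{n+1}(0) = c$ is automatic once $f_{d,c}^n(0) = 0$.

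Next I would handle the passage from "period $n$" to "minimal period $n$", paralleling the proof of Theorem \ref{thm_reform}. The decisive observation is that for any $z$,
\begin{equation*}
    R_d(0, c, z) = (c,\, c^d + c,\, 1) = R_d(0, c, 0),
\end{equation*}
since $d \cdot 0^{d-1} z + 1 = 1$ independently of $z$. Suppose $(0, c, 0)$ has minimal $R_d$-period $n$ and let $k$ be the minimal $f$-period of $0$ under $f_{d,c}$, so $k \mid n$. If $k < n$, write $R_d^k(0, c, 0) = (0, c, \alpha)$; the scalar $\alpha$ must be nonzero, for otherwise $(0, c, 0)$ already has period $k$. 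But then $R_d^{k+1}(0, c, 0) = R_d(0, c, \alpha) = R_d(0, c, 0) = R_d^1(0, c, 0)$, and applying $R_d^{\,n-k-1}$ gives $R_d^n(0, c, 0) = R_d^{\,n-k}(0, c, 0) = (0, c, 0)$, exhibiting a strictly smaller period $n-k$, a contradiction. Hence $k = n$.

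The two directions of the theorem now follow by combining these facts with Lemma \ref{lem4}. If $(0, c, 0)$ has minimal $R_d$-period $n$, then $0$ has minimal $f$-period $n$ under $f_{d,c}$ and $(f_{d,c}^n(0))' = 0$, so Lemma \ref{lem4} gives $p \mid D_d(0,n)$. Conversely, if $p \mid D_d(0,n)$, Lemma \ref{lem4} produces $c \in \overline{\F_p}$ with $0$ of minimal $f$-period $n$ and $(f_{d,c}^n(0))' = 0$; the explicit formula then shows $R_d^n(0, c, 0) = (0, c, 0)$, and any $m < n$ with $R_d^m(0, c, 0) = (0, c, 0)$ would force $f_{d,c}^m(0) = 0$, contradicting minimality of $n$ as an $f$-period. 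The main subtle point will be the collapse identity $R_d(0, c, z) = R_d(0, c, 0)$, which is exactly what rules out the potential pathology of $(0, c, 0)$ acquiring an $R_d$-period that is a proper multiple of the minimal $f$-period of $0$; once this is noted, the argument closely mirrors Theorem \ref{thm_reform}.
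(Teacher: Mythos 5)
Your proposal is correct and follows essentially the same route as the paper: establish the closed form $R_d^n(0,c,z_0)=\bigl(f_{d,c}^n(0),f_{d,c}^{n+1}(0),(f_{d,c}^n(0))'\bigr)$ and reduce to the criterion of Lemma \ref{lem4}/Theorem \ref{thm_reform}, with the collapse $R_d(0,c,z)=R_d(0,c,0)$ handling minimality exactly as in the two-dimensional case. The paper's own proof is just a terser version of this (it states the orbit formula and cites Theorem \ref{thm_reform}), so your write-up supplies the same argument with the induction and minimality details made explicit.
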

    \begin{proof}
        The orbit of $(0,c,z_0)$ for any $z_0$ under $F(x,y,z)$ is
        \begin{equation*}
            R_d^n(0,c,z_0) = (G_d(0,n),G_d(0,n+1),G_d'(0,n)).
        \end{equation*}

        By Theorem \ref{thm_reform}, $p \mid D_d(0,n)$ if and only if there is a $c \in \overline{\F_p}$ such that $0$ is minimal period $n$ for $F_{d,c}$.
    \end{proof}
    The map $R_d$ appears to be an example where the dynamical Manin-Mumford conjecture does not hold, at least for periodic points and subvarieties.
    \begin{cor}
        Fix a prime $p$. If $\{n \col p \mid D_d(0,n)\}$ is infinite, then $R_d$ has a non-periodic subvariety with infinitely many periodic points over $\overline{\F_p}$.
    \end{cor}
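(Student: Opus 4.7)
The plan is to exhibit the affine line
\begin{equation*}
V = V(x,z) = \{(0,c,0) : c \in \A^1\} \subset \A^3
\end{equation*}
as the desired non-periodic subvariety of $R_d$ containing infinitely many periodic points over $\overline{\F_p}$.

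First I would verify that $V$ is not periodic under $R_d$. Unwinding the iteration implicit in the proof of Theorem \ref{thm_reform2}, a direct induction gives
\begin{equation*}
R_d^k(0,c,0) = \bigl( f_{d,c}^k(0),\; f_{d,c}^{k+1}(0),\; (f_{d,c}^k(0))' \bigr)
\end{equation*}
viewed as a polynomial triple in the parameter $c$. For $R_d^k(V) = V$ to hold the first and third coordinates would have to vanish identically in $c$; but $f_{d,c}^k(0)$ is a nonzero polynomial in $c$ of degree $d^{k-1}$ for every $k \geq 1$, so this fails. Hence $R_d^k(V) \neq V$ for every $k \geq 1$, and $V$ is not periodic.

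Next I would apply Theorem \ref{thm_reform2} directly: for each $n$ in the infinite set $\{n : p \mid D_d(0,n)\}$, the theorem furnishes some $c_n \in \overline{\F_p}$ such that the point $(0,c_n,0)$ is periodic under $R_d$ with minimal period exactly $n$. All such points lie on $V$, and distinct minimal periods force distinct points, so $V$ contains infinitely many periodic points of $R_d$ over $\overline{\F_p}$.

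There is no real obstacle; the argument is essentially immediate once Theorem \ref{thm_reform2} is in hand. The only subtlety worth flagging is that the corollary requires $V$ to be \emph{non-periodic} rather than the stronger statement that $V$ fails to be \emph{preperiodic}, as would be natural in the Manin--Mumford context; the polynomial-degree computation above gives exactly non-periodicity, which is what is required.
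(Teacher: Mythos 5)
Your proposal is correct and follows essentially the same route as the paper: both take the line $\{x=z=0\}$ as the subvariety and invoke Theorem \ref{thm_reform2} to produce periodic points of infinitely many distinct minimal periods on it. The only (cosmetic) difference is in verifying non-periodicity: you use the identity $R_d^k(0,c,0) = \bigl(f_{d,c}^k(0), f_{d,c}^{k+1}(0), (f_{d,c}^k(0))'\bigr)$ and the fact that $f_{d,c}^k(0)$ is a nonzero polynomial in $c$, while the paper simply observes that the point $(0,0,0)$ on the line maps to the fixed point $(0,0,1)$, which lies off the line, so no iterate of the line can equal the line.
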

    \begin{proof}
        If $\{n \col p \mid D_d(0,n)\}$ is infinite, then there are infinitely many periodic points on the line $\{x=z=0\}$. However, that line is not preperiodic since
        \begin{equation*}
            (0,0,0) \to (0,0,1) \to (0,0,1) \to \cdots.
        \end{equation*}
    \end{proof}

\section{Bicritical polynomials}\label{sect_bi}
    \quad Now we turn to the case of cubic polynomials with $2$ free critical points. We use the following monic centered form:
    \begin{equation*}
        g_{a,v}(z) = z^3 - 3a^2z + (2a^3+v),
    \end{equation*}
    which has critical points $\pm a$ and marked critical value $g_{a,v}(a) = v$.
    Note that if $a = -a = 0$, then this form is $z^3 + v$, which we discussed above.
    The goal is to determine the pairs $(a,v)$ for which $a$ and $-a$ have finite (forward) orbit, the cubic Misiurewicz points.
    In Theorem \ref{thm_degree3} we construct the PCF cubic polynomials as the set where both $a$ and $-a$ have finite orbits.

    \begin{lem} \label{lem3}
        \begin{equation*}
            \frac{g_{a,v}^{m+k}(z) - g_{a,v}^{m}(z)}{g_{a,v}^{m+k-1}(z) - g_{a,v}^{m-1}(z)}
                =g_{a,v}^{m+k-1}(z)^2 + g_{a,v}^{m+k-1}(z)g_{a,v}^{m-1}(z) + g_{a,v}^{m-1}(z)^2 - 3a^2
        \end{equation*}
    \end{lem}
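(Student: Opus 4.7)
The plan is to prove this by a one-line algebraic manipulation after unwinding the outermost application of $g_{a,v}$. Writing $g(z) = z^3 - 3a^2 z + (2a^3+v)$, the key observation is that the constant term $2a^3+v$ and the linear coefficient $-3a^2$ depend only on the parameters, not on $z$, so differences of iterates collapse nicely.

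First I would introduce the shorthand $A = g_{a,v}^{m+k-1}(z)$ and $B = g_{a,v}^{m-1}(z)$. Applying $g_{a,v}$ once to each gives
\begin{align*}
    g_{a,v}^{m+k}(z) &= A^3 - 3a^2 A + (2a^3+v),\\
    g_{a,v}^{m}(z) &= B^3 - 3a^2 B + (2a^3+v).
\end{align*}
Subtracting cancels the constant $2a^3+v$ and yields
\begin{equation*}
    g_{a,v}^{m+k}(z) - g_{a,v}^{m}(z) = A^3 - B^3 - 3a^2(A-B) = (A-B)\bigl(A^2 + AB + B^2 - 3a^2\bigr),
\end{equation*}
using the standard factorization of $A^3 - B^3$.

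Dividing both sides by $A - B = g_{a,v}^{m+k-1}(z) - g_{a,v}^{m-1}(z)$ (this is a division in the polynomial ring in $z,a,v$ since the quotient above is manifestly polynomial) gives exactly the claimed identity. There is no real obstacle here: the lemma is a direct algebraic identity that exploits the monic-centered normal form of $g_{a,v}$, and in particular the fact that the cubic has no quadratic term, which is what allows the difference of iterates to factor cleanly through the difference of preceding iterates. This identity is the bicritical analogue of the telescoping used implicitly in the unicritical case $f_{d,c}(z) = z^d + c$, and it is the reason the generalized dynatomic polynomials for the family $g_{a,v}$ admit a tractable expression that can be fed into the proof of Theorem~\ref{thm_degree3}.
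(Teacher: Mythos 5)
Your proposal is correct and matches the paper's proof essentially verbatim: the paper likewise abbreviates $F = g_{a,v}^{m+k-1}(z)$ and $G = g_{a,v}^{m-1}(z)$, cancels the constant term $2a^3+v$, and factors $F^3 - G^3 - 3a^2(F-G)$ through $F-G$. Your added remarks about the role of the centered form and polynomiality of the quotient are accurate but not needed.
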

    \begin{proof}
        For notational convenience, write
        \begin{equation*}
            g_{a,v}^{m+k-1}(z) = F \quad \text{and} \quad g_{a,v}^{m-1}(z) = G.
        \end{equation*}
        Then we have
        \begin{align*}
            \frac{g_{a,v}^{m+k}(z) - g_{a,v}^{m}(z)}{g_{a,v}^{m+k-1}(z) - g_{a,v}^{m-1}(z)}
            &= \frac{F^3 - 3a^2F - (G^3 - 3a^2G)}{F-G}\\
            &= \frac{(F^3- G^3) - 3a^2(F-G)}{F-G}\\
            &= F^2 + FG + G^2 - 3a^2.
        \end{align*}
    \end{proof}
    Recall exact divisibility: we say $a^k \mid\mid b$ if $a^k \mid b$, but $a^{k+1} \nmid b$.
    \begin{prop} \label{prop1}
        The only zeros of $\Phi^{\ast}_{g,m,n}(a)$ that do not have $a$ with exact period $(m,n)$ for $g_{a,v}$ have $m\geq 1$ and are in one of the following cases:
        \begin{enumerate}
            \item $a=0$ and $n \mid m-1$.
            \item $(m,n)$ has $m \neq 0$, $n \mid m-1$, and $a$ has exact period $(0,n)$. Furthermore, $\Phi^{\ast}_{g,0,n}(a) \mid\mid \Phi^{\ast}_{g,m,n}(a)$.
            \item $(m,n)$ has $m \neq 0$, $g_{a,v}^{m-1}(a) = -a$, and $-a$ does not have exact period $(1,n)$.
        \end{enumerate}
    \end{prop}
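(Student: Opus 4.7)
The plan is to leverage the two equivalent forms
\begin{equation*}
\Phi^{\ast}_{g,m,n}(a) = \prod_{k \mid n} F_k^{\mu(n/k)} = \frac{\Phi^{\ast}_{g,n}(g^m(a))}{\Phi^{\ast}_{g,n}(g^{m-1}(a))},
\end{equation*}
with $F_k = g^{m+k-1}(a)^2 + g^{m+k-1}(a) g^{m-1}(a) + g^{m-1}(a)^2 - 3a^2$ from Lemma \ref{lem3}. Set $N = \Phi^{\ast}_{g,n}(g^m(a))$ and $D = \Phi^{\ast}_{g,n}(g^{m-1}(a))$. A parameter $(a_0,v_0)$ is a zero of the polynomial $\Phi^{\ast}_{g,m,n}(a)$ exactly when $\ord_{(a_0,v_0)}(N) > \ord_{(a_0,v_0)}(D)$. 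If $D(a_0,v_0) \neq 0$, this reduces to $N(a_0,v_0) = 0$, which forces $g^m(a_0)$ to have exact period $n$ while $g^{m-1}(a_0)$ lies in no cycle; so the preperiod of $a_0$ equals $m$ and $a_0$ has exact period $(m,n)$, giving a true zero. Thus every spurious zero satisfies $D(a_0,v_0) = 0$, meaning $g^{m-1}(a_0)$ has exact period $n$.

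Assuming $D(a_0,v_0) = 0$, I would analyze the $F_k$ termwise via Lemma \ref{lem3}. For $k \mid n$ with $k < n$, $g^k$ moves $g^{m-1}(a_0)$ to a distinct point of its $n$-cycle, so both the numerator $g^{m+k}(a) - g^m(a)$ and the denominator $g^{m+k-1}(a) - g^{m-1}(a)$ in the expression for $F_k$ are nonzero at $(a_0,v_0)$, hence $F_k(a_0,v_0) \neq 0$. For $k = n$ we have $g^{m+n-1}(a_0) = g^{m-1}(a_0)$, and Lemma \ref{lem3} collapses to
\begin{equation*}
F_n(a_0,v_0) = 3(g^{m-1}(a_0))^2 - 3a_0^2 = 3(g^{m-1}(a_0) - a_0)(g^{m-1}(a_0) + a_0).
\end{equation*}
Using the polynomial identity $F_n \cdot (g^{m+n-1}(a) - g^{m-1}(a)) = g^{m+n}(a) - g^m(a)$ to compare local orders shows that $\ord(N) - \ord(D) = \ord(F_n)$, so the spurious vanishing is equivalent to $g^{m-1}(a_0) = \pm a_0$. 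The branch $g^{m-1}(a_0) = a_0$ with $a_0 \neq 0$ puts $a_0$ itself in the $n$-cycle and forces $n \mid m-1$, which is Case (2); the branch $g^{m-1}(a_0) = -a_0$ with $a_0 \neq 0$ puts $-a_0$ in the $n$-cycle, giving $-a$ exact period $(0,n)$ and thus not $(1,n)$, which is Case (3); the degenerate $a_0 = 0$ collapses both branches into Case (1). A quick orbit inspection confirms preperiod $0$ in Case (2) and preperiod at most $m-1$ in Case (3), so $a_0$ fails to have exact period $(m,n)$ in each case.

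For the divisibility $\Phi^{\ast}_{g,0,n}(a) \mid\mid \Phi^{\ast}_{g,m,n}(a)$ in Case (2), I would start from the algebraic identity
\begin{equation*}
F_k = (g^{m+k-1}(a) - g^{m-1}(a))(g^{m+k-1}(a) + 2g^{m-1}(a)) + 3(g^{m-1}(a) - a)(g^{m-1}(a) + a),
\end{equation*}
which comes from $u^2 + uw + w^2 - 3a^2 = (u-w)(u+2w) + 3(w-a)(w+a)$ with $u = g^{m+k-1}(a)$, $w = g^{m-1}(a)$. Since $\Phi^{\ast}_{g,0,n}(a)$ divides $g^{m-1}(a) - a$ when $n \mid m-1$ (via the standard factorization $g^{j}(z) - z = \prod_{d \mid j} \Phi^{\ast}_{g,d}(z)$), reducing modulo $\Phi^{\ast}_{g,0,n}(a)$ sends $g^{m-1}(a) \mapsto a$ and yields $F_k \equiv (g^k(a) - a)(g^k(a) + 2a)$, whence
\begin{equation*}
\Phi^{\ast}_{g,m,n}(a) \equiv \Phi^{\ast}_{g,0,n}(a) \cdot \prod_{k \mid n}(g^k(a) + 2a)^{\mu(n/k)} \pmod{\Phi^{\ast}_{g,0,n}(a)^2}.
\end{equation*}
This establishes $\Phi^{\ast}_{g,0,n}(a) \mid \Phi^{\ast}_{g,m,n}(a)$; the strict non-divisibility $\Phi^{\ast}_{g,0,n}(a)^2 \nmid \Phi^{\ast}_{g,m,n}(a)$ reduces to showing the auxiliary factor $\prod_{k \mid n}(g^k(a) + 2a)^{\mu(n/k)}$ does not vanish identically on the exact-period-$n$ locus.

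The main obstacle I anticipate is exactly this final non-vanishing check: ruling out $g^k(a) \equiv -2a$ on a component of $V(\Phi^{\ast}_{g,0,n}(a))$ for some $k \mid n$. Since this is a codimension-one condition on a one-dimensional variety, it should fail on each component, but making this rigorous likely requires either exhibiting an explicit exact-period-$n$ parameter with $g^k(a) \neq -2a$ for all $k \mid n$, or a dimension count in the moduli of PCF cubics. A secondary technicality is justifying the order comparison of $N$ and $D$ cleanly at possibly non-reduced or singular points of $\{D = 0\}$; here I would pass to the local ring at $(a_0,v_0)$ and invoke squarefreeness of the cubic dynatomic polynomials.
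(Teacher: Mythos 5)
Your overall route coincides with the paper's: the same product decomposition $\prod_{k\mid n}F_k^{\mu(n/k)}$, the same collapse of the expression in Lemma \ref{lem3} to $3\left(g_{a,v}^{m-1}(a)-a\right)\left(g_{a,v}^{m-1}(a)+a\right)$ on the locus where numerator and denominator share a zero, and the same sorting into the three cases according to $g_{a,v}^{m-1}(a)=\pm a$, with $a=0$ as the degenerate branch. The genuine gap is the one you flag yourself: the exact divisibility $\Phi^{\ast}_{g,0,n}(a)\mid\mid\Phi^{\ast}_{g,m,n}(a)$ in case (2). You correctly reduce it to showing that no factor $g^k(a)+2a$ with $k\mid n$ vanishes identically on a component of $V(\Phi^{\ast}_{g,0,n}(a))$, but the strategies you offer do not close this: a dimension count cannot exclude that an entire component of the curve lies inside $\{g^k(a)=-2a\}$ (that containment is precisely what must be ruled out), and producing an explicit witness on every component for every $n$ is not feasible. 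The missing observation is that $-2a$ is the co-critical point: from $g_{a,v}(z)-g_{a,v}(a)=(z-a)^2(z+2a)$ one gets $g(-2a)=g(a)$, so if $g^k(a)=-2a$ at a parameter where $a$ has exact period $n$, then $g^{k+1}(a)=g(a)$ and hence $n\mid k$, impossible for $k<n$; and for $k=n$ the factor is $g^n(a)+2a=3a\neq 0$. This is exactly how the paper disposes of the second factor (at the relevant points $g^{m+k-1}(a)=a$, so $g^{m+k-1}(a)+2a=3a\neq 0$), after which Epstein's transversality (his Corollary 2, cited in the paper) gives simplicity of the remaining factor $g^{m+k-1}(a)-a$ and hence multiplicity exactly one.

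A secondary soft spot: your claim that $D(a_0,v_0)\neq 0$ together with $N(a_0,v_0)=0$ forces $g^m(a_0)$ to have exact period $n$ assumes that roots of the specialized dynatomic polynomial have exact period $n$, which fails in general (smaller period with root-of-unity multiplier). Ruling this out, and justifying the order comparison $\ord(N)-\ord(D)=\ord(F_n)$ when several $F_k$ vanish simultaneously, requires the multiplicity bookkeeping that the unicritical case handles via Lemmas \ref{lem1} and \ref{lem2} and that ultimately rests on the Epstein--Silverman simplicity results. The paper's proof of Proposition \ref{prop1} leans on the same inputs, so this is a shared reliance rather than a divergence, but in your write-up it is invoked only implicitly and should be made explicit.
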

    \begin{proof}
        If $m=0$, then by Silverman \cite{Silverman18} all the multiplicities are $1$ and multiplicity $1$ roots of dynatomic polynomials with $m=0$ have exact period $(0,n)$. So we may assume that $m \geq 1$.

        Assume first that $a=0$. We are working with the map $z^3 + v$ and we know when it is post-critically finite from Theorem \ref{thm1}. In particular, the only situation where the exact period is incorrect is for $n \mid m-1$ since
        \begin{equation*}
            \Phi^{\ast}_{g,0,n}(0)^2 \mid\mid \Phi^{\ast}_{g,m,n}(0),
        \end{equation*}
        but the denominator of $T(m,n,a)$ only contains $\Phi^{\ast}_{g,0,n}(0)$.

        We now assume that $a \neq 0$. By Lemma \ref{lem3}
        \begin{align}
            F_k(z) &:= \frac{g_{a,v}^{m+k}(z) - g_{a,v}^{m}(z)}{g_{a,v}^{m+k-1}(z) - g_{a,v}^{m-1}(z)} \label{eq3} \\
                &=g_{a,v}^{m+k-1}(z)^2 + g_{a,v}^{m+k-1}(z)g_{a,v}^{m-1}(z) + g_{a,v}^{m-1}(z)^2 - 3a^2. \label{eq1}
        \end{align}
        The zeros $(a,v)$ of $F_k(a)$ which do not have $a$ with exact period $(m,k)$ are the zeros of the denominator that are higher multiplicity zeros of the numerator, i.e. zeros of (\ref{eq1}) that are also zeros of the denominator of (\ref{eq3}).

        Consider $F_k(z)$ symbolically with $F=g_{a,v}^{m+k-1}(z)$ and $G= g_{a,v}^{m-1}(z)$. Then (\ref{eq1}) is
        \begin{equation} \label{eq4}
            F^2 + FG + G^2 - 3a^2.
        \end{equation}
        Since we are looking for zeros of (\ref{eq4}) which are also zeros of the denominator of (\ref{eq3}), we consider $F=G$. Then (\ref{eq4}) is zero when
        \begin{equation*}
            F^2 = G^2 = a^2.
        \end{equation*}
        Using $z=a$, we have two possibilities:
        \begin{equation*}
            g_{a,v}^{m-1}(a)= g_{a,v}^{m+k-1}(a)=a \quad \text{or} \quad g_{a,v}^{m-1}(a) = g_{a,v}^{m+k-1}(a)=-a.
        \end{equation*}
        Notice that
        \begin{equation*}
            g_{a,v}(z) - g_{a,v}(a) = (z-a)^2(z+2a).
        \end{equation*}
        Working modulo $(g_{a,v}^{m-1}(a)-a)$, we also have
        \begin{equation} \label{eq2}
            g_{a,v}(z) - g_{a,v}^m(a) \equiv (z-a)^2(z+2a) \pmod{(g_{a,v}^{m-1}(a)-a)}.
        \end{equation}

        Assume that $g_{a,v}^{m-1}(a)=g_{a,v}^{m+k-1}(a)= a$ so that there is a $t$ such that $t \mid k$, $t \mid (m-1)$, and $g_{a,v}^{t}(a) =a$. If we consider
        \begin{equation*}
            \tilde{F_k}(z) := \frac{g_{a,v}^{m+k}(z) - g_{a,v}^{m}(a)}{g_{a,v}^{m+k-1}(z) - g_{a,v}^{m-1}(a)}
        \end{equation*}
        then we can apply (\ref{eq2}) with $z=g_{a,v}^{m+k-1}$ to have
        \begin{align*}
            \tilde{F_k}(z) &\equiv \frac{(g_{a,v}^{m+k-1}(z) - a)^2(g_{a,v}^{m+k-1}(a) +2a)}{g_{a,v}^{m+k-1}(z) - a} \pmod{(g_{a,v}^{m-1}(a)-a)} \\
            &\equiv(g_{a,v}^{m+k-1}(z) - a)(g_{a,v}^{m+k-1}(z) +2a) \pmod{(g_{a,v}^{m-1}(a)-a)}.
        \end{align*}
        By Epstein \cite[Corollary 2]{Epstein} for any $\ell$, if $(a,v)$ is a point on $g_{a,v}^{\ell}(z)-a$, then it is a simple point, i.e., the curve is smooth at that point. Since $a \neq 0$, then $(g_{a,v}^{m+k-1}(a) +2a) \neq 0$. So we have each zero $(a,v)$ of $g_{a,v}^t(a)=a$ occurs with multiplicity $1$ in $\tilde{F_k}$ and, thus, multiplicity $1$ in $F_k$.
        Recall that
        \begin{equation*}
            \Phi^{\ast}_{g,m,n}(a) = \prod_{k \mid n} F_k^{\mu(n/k)}.
        \end{equation*}
        Thus, by (\ref{eq_mobius}), to have a zero of $\Phi^{\ast}_{g,m,n}(a)$ with $g_{a,v}^k(a)=a$ we must have the exact period of $a$ is $n$, and $n \mid (m-1)$. Thus, we see that the pairs $(a,v)$ where $a$ is strictly periodic are also zeros of $\Phi^{\ast}_{g,m,n}(a)$ of multiplicity $1$ and from Silverman \cite{Silverman18} they are multiplicity $1$, in $\Phi^{\ast}_{g,0,n}(a)$. Therefore, we have the exact divisibility as required.

        If $g_{a,v}^{m-1}(a)=g_{a,v}^{m+k-1}(a)= -a$, then
        \begin{equation*}
           g_{a,v}^{m+k-1}(a) = g_{a,v}^k(-a) = -a.
        \end{equation*}
        We can apply the same argument as the previous case to see that $-a$ must have exact period $n$ with $n \mid (m-1)$. Our assumption has the orbit of $a$ intersecting the orbit of $-a$; in particular, $a$ is in the strictly preperiodic portion of the orbit of $-a$. Then, $a$ has exact period $(m,n)$ if and only if $-a$ has exact period $(1,n)$.

    \end{proof}

    \begin{proof}[Proof of Theorem \ref{thm_degree3}]
        From the construction of $T(m,n,z)$ with dynatomic polynomials, it is clear that all the pairs $(a,v)$ with the appropriate critical point orbits are on the variety. What we need to see is that no additional points, points with ``smaller'' exact period (except possibly $a=0$), are on the variety.

        By Proposition \ref{prop1} we have four cases to consider:
        \begin{enumerate}
            \item $a=0$, $n_1=n_2$ and both $n_1 \mid m_1-1$ and $n_1 \mid m_2-1$.
            \item $(m_1,n_1)$ have $m_1 \neq 0$ and $n_1 \mid m_1-1$ and $a$ has exact period $(0,n_1)$.
            \item $(m_2,n_2)$ have $m_2 \neq 0$ and $n_2 \mid m_2-1$ and $-a$ has exact period $(0,n_2)$.
            \item $(m_1,n_1)$ have $m_1 \neq 0$ and $g_{a,v}^{m_1-1}(a) = -a$, and $-a$ does not have exact period $(1,n_1)$.
        \end{enumerate}
        We address each case separately.
        \begin{enumerate}
            \item In this case, there is a zero of $T(m_1,n_1,0)$ and $T(m_2,n_2,0)$ that has $a$ with exact period $(0,n_1)$. If $a=0$, then this is the map $g_{a,v}(z) = z^3 + v$, and we can compute exactly which such maps have $0$ with given period from Theorem \ref{thm1}. These are exactly the points described in the statement of the theorem.

            \item By the exact divisibility $\Phi^{\ast}_{g,0,n}(a) \mid\mid \Phi^{\ast}_{g,m,n}(a)$ from Proposition \ref{prop1}, the construction of $T(m,n,z)$ excludes all such points from the variety.

            \item Same as the previous case using $-a$ in place of $a$.

            \item If $-a$ does not have the correct period $(1,n_1) = (m_2,n_2)$, then the point $(a,v)$ is not on the variety $V(T(m_1,n_1,a),T(m_2,n_2,-a))$ since $T(m,n,z)$ is a dynatomic polynomial and its root will have period $(m,n)$. If $-a$ does have the correct period, then the point $(a,v)$ is on the variety.
        \end{enumerate}
    \end{proof}

    Silverman \cite{Silverman18} proves the transversality statement of Thurston's Theorem for $g_{a,v}(z)$ when $m_1,m_2\leq 1$. We do not address the extension of this problem here other than to mention that one needs to be careful about choosing the ``correct'' equations for a Thurston rigidity result. For example, when the two critical points, $\pm a$, both belong to the same cycle, we need a different set of equations than when they are part of separate cycles.
    \begin{exmp}
        Consider
        \begin{equation*}
            (m_1,n_1) = (2,2) \quad \text{and} \quad (m_2,n_2) = (1,2).
        \end{equation*}
        Consider
        \begin{equation*}
            a = \frac{i}{\sqrt[3]{4}} \quad \text{and} \quad v = -\frac{i}{\sqrt[3]{4}}.
        \end{equation*}
        Then we have the portrait
        \begin{equation*}
            \xygraph{
            !{<0cm,0cm>;<1cm,0cm>:<0cm,1cm>::}
            !{(0,0) }*+{a}="a"
            !{(1,0) }*+{-a}="b"
            !{(2,0) }*+{\bullet}="c"
            !{(3,0) }*+{\bullet}="d"
            "a":"b"
            "b":"c"
            "c":@(rd,ru)"d"
            "d":@(rd,ru)"c"
            }
        \end{equation*}
        We compute the jacobians to see that $(a,v)$ has multiplicity $>1$ for the variety
        \begin{equation*}
             V(T(2,2,a),T(1,2,-a))
        \end{equation*}
        but multiplicity 1 for the variety
        \begin{equation*}
             V(g_{a,v}(a)+a,T(1,2,-a)).
        \end{equation*}
\begin{code}
\begin{python}
R.<a,v>=PolynomialRing(QQ,2)
P1.<x,y>=ProjectiveSpace(R,1)
H=Hom(P1,P1)
f=H([x^3-3*a^2*x*y^2 + (2*a^3+v)*y^3,y^3])
N=[[2,2],[1,2]]
I0=R.ideal(Gleason2(f,N[0],a))
I1=R.ideal(f.nth_iterate_map(1).dehomogenize(1)[0](a)+a)
I2=R.ideal(Gleason2(f,N[1],-a))

J=I0+I1
print J.vector_space_dimension()
K=J.variety(ring=ComplexField())
print len(K)

J2=I0+I2
print J2.vector_space_dimension()
K2=J2.variety(ring=ComplexField())
print len(K2)

jac=jacobian(J.gens(),R.gens())
for d in K:
    aa=d[a]
    vv=d[v]
    print "---------------"
    print aa,vv
    print jac.determinant().subs({a:aa, v: vv})

jac=jacobian(J2.gens(),R.gens())
for d in K:
    aa=d[a]
    vv=d[v]
    print "---------------"
    print aa,vv
    print jac.determinant().subs({a:aa, v: vv})
\end{python}
or
\begin{python}
S.<x>=PolynomialRing(QQ)
K.<i>=NumberField(x^2+1)
S.<x>=PolynomialRing(QQ)
L.<w>=K.extension(x^3-4)
P1.<x,y>=ProjectiveSpace(L,1)
H=Hom(P1,P1)
aa=1/w*i
vv=-1/w*i
f=H([x^3-3*aa^2*x*y^2 + (2*aa^3+vv)*y^3,y^3])

f.orbit(P1(aa),4)

f.orbit(P1(-aa),3)
\end{python}
Some useful code to check orbit lengths and multiplicity.
\begin{python}
R.<a,v>=PolynomialRing(QQ,2)
P1.<x,y>=ProjectiveSpace(R,1)
H=Hom(P1,P1)
f=H([x^3-3*a^2*x*y^2 + (2*a^3+v)*y^3,y^3])

N=[[2,2],[1,2]]
I0=R.ideal(Gleason2(f,N[0],a))
I2=R.ideal(Gleason2(f,N[1],-a))

J=I0+I2
print J.vector_space_dimension()
K=J.variety(ring=ComplexField(100))
print len(K)

P1C.<x,y>=ProjectiveSpace(CC,1)
H=Hom(P1C,P1C)
jac=jacobian(J.gens(),R.gens())
for d in K:
    aa=d[a]
    vv=d[v]
    print "---------------___"
    print aa,vv
    print "jac:",jac.determinant().subs({a:aa, v: vv})
    fC=H([x^3-3*aa^2*x*y^2 + (2*aa^3+vv)*y^3,y^3])
    for m in range(0,3):
        for n in range(1,3):
            fC.nth_iterate(P1C(aa,1),m+n).dehomogenize(1)[0]).abs()
            if (fC.nth_iterate(P1C(aa,1),m).dehomogenize(1)[0] - fC.nth_iterate(P1C(aa,1),m+n).dehomogenize(1)[0]).abs() < 10^(-3):
                print "aa:",[m,n]
    for m in range(0,3):
        for n in range(1,3):
            if (fC.nth_iterate(P1C(-aa,1),m).dehomogenize(1)[0] - fC.nth_iterate(P1C(-aa,1),m+n).dehomogenize(1)[0]).abs() < 10^(-3):
                print "-aa:",[m,n]
\end{python}
\end{code}
    \end{exmp}

\bibliography{misiurewicz}
\bibliographystyle{plain}

\end{document}